\newtheorem{theo}{Theorem}[section]
\newtheorem{lemma}[theo]{Lemma}
\newcounter{c}
\newcounter{d}
\newcounter{b}
\newcommand{\R}{\mathbb{R}} 
\newcommand{\e}{\varepsilon} 
\newcommand{\supp}{\mathrm{supp}} 
\newcommand{\dist}{\mathrm{dist}} 
\begin{document}
\title{Formation of stationary interfaces in the fast reaction limit}
\author{Yuki Tsukamoto}
\date{}

\address{Tokyo University of Science,
	162-8601, Tokyo, Japan}
\email{math.y.tsukamoto@gmail.com} 

\begin{abstract}
We study a two-component reaction-diffusion system in which one of the reaction terms becomes singularly large. 
Assuming that the initial data are nonnegative and mutually segregated, we prove that the solution converges 
to that of the heat equation in the nonreactive region, while it remains zero elsewhere. The analysis is based 
on explicitly constructed barrier functions and a comparison argument adapted to systems with asymmetric 
reaction terms, which together provide control near the interface. As a result, the limiting behavior exhibits a stationary 
phase interface determined by the initial support of the reactive component.
\end{abstract}
\maketitle
	\section{Introduction}
	Reaction-diffusion systems have long been used as a fundamental framework for modeling the interplay 
	between spatial transport and local interactions. In their standard form,
\[
  \partial_t u = D_1 \Delta u + f_1(u,v), \qquad
  \partial_t v = D_2 \Delta v + f_2(u,v),
\]
they arise in a wide variety of applications, including biology, chemistry, and materials science. 
The diffusion terms describe spatial dispersion, while the reaction terms model nonlinear interactions 
between the components.

In many problems, the reaction terms may act on a much faster time scale than diffusion. 
To analyze such regimes, one introduces a large parameter \( k > 0 \) to scale the reaction terms 
and studies the singular limit as \( k \to \infty \). A typical example is given by the system
\[
  \partial_t u_k = D_1\Delta u_k - k f_1(u_k,v_k), \qquad
  \partial_t v_k = D_2\Delta v_k - k f_2(u_k,v_k).
\]
This limit, known as the fast reaction limit, has been extensively studied across a range of settings. 
Prominent examples include diffusive chemical reactions, both irreversible and reversible \cite{hilhorstFastReactionLimit1996,eymard2001reaction},  
spatial segregation phenomena in competitive biological systems \cite{dancer1999spatial},  
and reactive solute transport in porous media \cite{bouillard2009fast}.

A well-studied class of such problems is given by balanced systems, in which both components react 
at comparable fast rates. This is typically modeled by assuming that the two reaction terms are proportional,
\( f_1 = \lambda f_2 \), where \(\lambda\) is a constant. Under suitable assumptions, these systems have been shown to converge, 
in the limit \( k \to \infty \), to one- or two-phase Stefan problems that describe the evolution 
of moving interfaces (see, for example, \cite{crooks2004spatial,dancer1999spatial,eymard2001reaction,hilhorst1997diffusion}).

Recent work has further advanced the understanding of balanced systems.  
Stephan established EDP-convergence for a linear reversible reaction-diffusion system based on 
its gradient flow structure \cite{stephan2021edp}.
 Perthame and Skrzeczkowski investigated systems with non-monotone reaction 
 functions and provided a limiting description based on Young measures \cite{perthame2023fast}.
 Crooks and Du studied the fast reaction limit for reaction-diffusion systems 
with nonlinear diffusion structure \cite{crooks2024fast}.

In balanced systems, the reaction terms are proportional, whereas in unbalanced systems, the reaction structure is asymmetric, as represented by \( f_1 \ne \lambda f_2 \).  
Conti et al.~\cite{conti2005asymptotic} and Hilhorst et al.~\cite{hilhorst2008relative} studied steady multi-component competition-diffusion systems  
that correspond to such unbalanced cases. 
Time-dependent unbalanced systems have been investigated by Iida et al.~\cite{Iida2017} and Hayashi~\cite{hayashi2021spatial},  
who analyzed the evolution of interfaces in various reaction regimes.
This asymmetry in the interaction can lead to fundamentally different limiting behavior as \( k \to \infty \),  
compared to the balanced case.

A representative example is the system
\begin{equation} \label{eq:unbalanced}
\left\{
\begin{aligned}
  \partial_t u_k &= D_1 \Delta u_k - k u_k^{m_1} v_k^{m_2} &&\text{in } Q_T := \Omega \times (0,T), \\
  \partial_t v_k &= D_2 \Delta v_k - k u_k^{m_3} v_k^{m_4} &&\text{in } Q_T, \\
  \partial_\nu u_k &= \partial_\nu v_k = 0 &&\text{on } \partial\Omega \times (0,T), \\
  u_k(\cdot,0) &= u_0, \quad v_k(\cdot,0) = v_0 &&\text{in } \Omega,
\end{aligned}
\right.
\end{equation}
where \( m_1, m_2, m_3, m_4 \ge 1 \). The difference in exponents reflects the non-symmetric 
interaction between \( u_k \) and \( v_k \). In such systems, the fast reaction limit can give rise to 
a variety of interface behaviors, depending sensitively on the reaction nonlinearities.

Iida et al.~\cite{Iida2017} systematically classified the asymptotic behavior of solutions to the 
system~\eqref{eq:unbalanced} under Neumann boundary conditions and non-overlapping initial data,
in the special case where \( D_1 = 1 \) and \( D_2 = 0 \); that is, only the \( u_k \)-component diffuses.
They considered four representative exponent settings:
\begin{itemize}
  \item \textbf{Case I}: \( (m_1, m_2, m_3, m_4) = (m_1, 1, 1, 1) \) with \( m_1 > 3 \): the interface vanishes instantly.
  \item \textbf{Case II}: \( (1, m_2, 1, 1) \) with \( m_2 > 1 \): the interface propagates with finite speed and is governed by a one-phase Stefan problem.
  \item \textbf{Case III}: \( (1, 1, m_3, 1) \) with \( m_3 > 1 \): the interface remains stationary.
  \item \textbf{Case IV}: \( (1, 1, 1, m_4) \) with \( 1 \le m_4 < 2 \): the interface moves with finite speed.
\end{itemize}
In their classification, the behavior for Case~I with \( m_1 \in (1, 3] \) and for Case~IV 
with \( m_4 \ge 2 \) remained open.
In~\cite{tsukamoto2025}, the interface in Case~I was shown to vanish instantaneously 
even for \( m_1 \in (2,3] \), under both Neumann and Dirichlet boundary conditions.

In this paper, we focus on Case IV with \( m_4 \ge 2 \) under Neumann boundary 
conditions and non-overlapping initial data.  
This parameter regime has not been addressed in the existing literature.
Furthermore, we also consider the effect of varying \( m_3 \) in the reaction term of the \( v_k \) equation.  
Our main result is stated below.
\begin{theo}\label{thm:main}
Let \( \Omega \subset \mathbb{R}^n \) be a bounded domain with \( C^2 \) boundary
and let \(2p>n+2\) be fixed.
For each $k>0$ consider the reaction-diffusion system
\begin{align} \label{eq:main system}
  \begin{cases}
    \partial_{t}u_{k}=\Delta u_{k}-k\,u_{k}v_{k} &\text{in }\Omega\times(0,T),\\
    \partial_{t}v_{k}=-k\,u_{k}^{m_{3}}v_{k}^{m_{4}} &\text{in }\Omega\times(0,T),\\
    \partial_{\nu}u_{k}=0 &\text{on }\partial\Omega\times(0,T),\\
    u_{k}(\cdot,0)=u_{0},\quad v_{k}(\cdot,0)=v_{0} &\text{in }\Omega.
  \end{cases}
\end{align}
Assume the initial data satisfy:
\begin{enumerate}
 \item     $v_0 \in L^{\infty}(\Omega)$, \ $v_0 \ge 0$, \ $v_0 \not\equiv 0$, $v_0$ is lower semicontinuous in $\Omega$;
 \item $u_0 \in C(\overline{\Omega})$, \ $u_0 \ge 0$, \ $u_0 \not\equiv 0$, \ $u_0 \in W^{2,p}(\Omega  \setminus \supp (v_0))$;
\item $u_0 v_0 \equiv 0$ in $\Omega$;
\item $\Omega \setminus \supp (v_0) \Subset \Omega $ and $\partial\supp (v_0)$ is of class $C^2$.
\end{enumerate}
Assume further that $m_3 \ge 1$, $m_4 \ge 1$, and that either $m_3 > 1$ or $m_4 \ge 2$.
Then there exists a function $u_{\infty} \in   C(\overline{Q_T}) \cap W^{2,1}_p((\Omega \setminus\supp (v_0)) \times (0,T))$
such that $u_k \to u_\infty$ uniformly in $Q_T$ as $k \to \infty$, and $u_\infty$ solves the heat equation
\begin{align}\label{eq:heat equation}
  \begin{cases}
    \partial_t u_\infty = \Delta u_\infty & \text{in } (\Omega \setminus\supp (v_0)) \times (0,T),\\
     u_\infty = 0 & \text{on } \partial(\supp (v_0)) \times (0,T),\\
    u_\infty(\cdot,0) = u_0 & \text{in } \Omega \setminus\supp (v_0),
  \end{cases}
\end{align}
with $u_\infty \equiv 0$ in $\supp (v_0)$.

Moreover, for any compact set $\Omega'\Subset \supp v_0$, one has $v_k(x,t)\to v_0(x)$ uniformly on $\Omega'\times[0,T]$.
\end{theo}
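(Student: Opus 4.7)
My plan is to first establish that the maximum principle gives $0 \le u_k \le M_1 := \|u_0\|_{L^\infty(\Omega)}$, and that viewing the second equation pointwise in $x$ as an ODE in $t$ with nonpositive right-hand side yields $0 \le v_k(x,t) \le v_0(x)$. In particular $v_k \equiv 0$ on $(\Omega \setminus \supp(v_0))\times(0,T)$, so $u_k$ satisfies the linear heat equation there; combined with $\Omega \setminus \supp(v_0)\Subset\Omega$, the $C^2$ regularity of $\partial\supp(v_0)$, and $u_0 \in W^{2,p}(\Omega\setminus\supp(v_0))$ with $2p > n+2$, linear parabolic $L^p$ theory furnishes uniform $W^{2,1}_p$ estimates and hence $C^{0,\alpha}$-equicontinuity on this region up to $\partial\supp(v_0)$, the constants depending only on $M_1$. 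I would also observe that $u_0 \equiv 0$ on $\supp(v_0)$, because $u_0$ is continuous, $u_0 v_0 \equiv 0$, and $\{v_0 > 0\}$ is dense in $\supp(v_0)$.

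\textbf{Exponential barrier and bootstrap (the main obstacle).} The technical heart is to show $u_k \to 0$ uniformly on compact subsets of $\supp(v_0)$ through a distance-function barrier. Setting $d(x) := \dist(x,\partial\supp(v_0))$ for $x \in \supp(v_0)$, the $C^2$ regularity of $\partial\supp(v_0)$ makes $d \in C^2$ in a one-sided tubular neighborhood $N$ with $|\nabla d| = 1$ and $|\Delta d|$ bounded. Given a positive lower bound $v_k \ge \underline{v}$ on $N \times (0,T)$ and $\beta \in (0, \sqrt{\underline{v}})$, I would take the stationary ansatz
\[
  \Psi_k(x) := M_1 \exp\bigl(-\beta\sqrt{k}\,d(x)\bigr),
\]
for which a direct computation gives
\[
  \partial_t\Psi_k - \Delta\Psi_k + k v_k \Psi_k \ge \bigl(k(v_k - \beta^2) + \beta\sqrt{k}\,\Delta d\bigr)\Psi_k \ge 0
\]
for $k$ large. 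Since $\Psi_k \ge u_k$ on the parabolic boundary of $N$ (using $u_0 \equiv 0$ on $\supp(v_0)$ and $\Psi_k = M_1 \ge u_k$ on $\partial\supp(v_0)$), comparison yields $u_k(x,t) \le M_1 e^{-\beta\sqrt{k}\,d(x)}$, so that $u_k \to 0$ uniformly on $\{d \ge \delta\}$ for every $\delta > 0$; analogous ball-centred barriers (lower semicontinuity of $v_0$ supplies balls $B \subset \supp(v_0)$ on which $v_0 \ge v_* > 0$) cover the rest of $\supp(v_0)$. The main obstacle is to secure the lower bound $v_k \ge \underline{v}$, and this is precisely where the hypothesis $m_3 > 1$ or $m_4 \ge 2$ is essential. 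Integrating the ODE for $v_k$ pointwise yields, for $m_4 > 1$,
\[
  v_k(x,t)^{-(m_4-1)} = v_0(x)^{-(m_4-1)} + k(m_4 - 1) \int_0^t u_k(x,s)^{m_3}\,ds,
\]
with the analogue $v_k(x,t) = v_0(x) \exp\bigl(-k\int_0^t u_k(x,s)^{m_3}\,ds\bigr)$ for $m_4 = 1$. Substituting the barrier bound gives
\[
  k\int_0^T u_k(x,s)^{m_3}\,ds \le kTM_1^{m_3}\,e^{-m_3\beta\sqrt{k}\,d(x)},
\]
which tends to $0$ on $\{d \ge \delta\}$; the exponent condition is exactly what makes this integral decay rather than just remain bounded, preserving a lower bound on $v_k$ near $v_0$. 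A bootstrap in $t$ closes the loop: I define $\tau_k$ as the first time the assumed bound $v_k \ge v_*/2$ fails on $B$, apply the barrier on $[0, \tau_k]$, and use the integral estimate above to conclude $\tau_k = T$ for $k$ large.

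\textbf{Passing to the limit.} Choosing $\delta_k \to 0$ with $\sqrt{k}\,\delta_k \to \infty$ (e.g., $\delta_k = k^{-1/4}$), the barrier gives $u_k \le M_1 e^{-\beta\sqrt{k}\,\delta_k} \to 0$ uniformly on $\{d = \delta_k\}$. Applying Arzela-Ascoli to the equicontinuous family $\{u_k\}$ on $\overline{\Omega \setminus \supp(v_0)} \times [0,T]$, one extracts a uniformly convergent subsequence $u_k \to u_\infty$; combined with uniform $u_k \to 0$ on compact subsets of $\supp(v_0)$, this upgrades to $u_k \to u_\infty$ uniformly on $\overline{Q_T}$ with $u_\infty \equiv 0$ on $\supp(v_0)$. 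The limit satisfies $\partial_t u_\infty = \Delta u_\infty$ on $(\Omega \setminus \supp(v_0)) \times (0,T)$ (weak limit of heat equations), and the Dirichlet condition $u_\infty|_{\partial\supp(v_0)} = 0$ is forced by continuity of $u_\infty$ and its vanishing on $\supp(v_0)$; this gives exactly \eqref{eq:heat equation}. Uniqueness of \eqref{eq:heat equation} then upgrades subsequential convergence to convergence of the full sequence. Finally, for any $\Omega' \Subset \supp(v_0)$, uniform $u_k \to 0$ on $\Omega' \times [0,T]$ substituted into the ODE formulas above yields $v_k \to v_0$ uniformly on $\Omega' \times [0,T]$.
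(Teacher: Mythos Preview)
The bootstrap step does not close near the interface $\partial\supp(v_0)$, and this is precisely where the problem is hard. To run the barrier $\Psi_k(x)=M_1\exp(-\beta\sqrt{k}\,d(x))$ on the tubular neighbourhood $N$ you must already know $v_k\ge\underline v$ on \emph{all} of $N\times[0,\tau_k]$, in particular at points with $d(x)$ of order $k^{-1/2}$. But after applying the barrier, the only information you get back at such points is the trivial bound $u_k\le M_1$, so $k\!\int_0^{\tau_k} u_k^{m_3}\,ds$ is of order $k$ there and the lower bound on $v_k$ cannot be recovered. The same defect appears for the ball barriers near $\partial B$: if the first failure point $x^*$ of $v_k\ge v_*/2$ lies close to $\partial B$, the barrier estimate at $x^*$ is useless and no contradiction results. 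The circularity between the $u_k$-upper-bound and the $v_k$-lower-bound is a \emph{spatial} difficulty concentrated in a layer of width $O(k^{-1/2})$ around the interface, and a continuation-in-$t$ argument cannot resolve it. A strong warning sign is that your argument never genuinely uses the hypothesis $m_3>1$ or $m_4\ge 2$: the quantity $kT M_1^{m_3}e^{-m_3\beta\sqrt{k}d}$ tends to $0$ on $\{d\ge\delta\}$ for every $m_3\ge 1$, so if the bootstrap were valid it would equally ``prove'' the case $m_3=m_4=1$, in which the interface is known to \emph{move} (it becomes a one-phase Stefan problem).

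The paper circumvents the circularity by constructing a supersolution \emph{pair} $(U,V)$ for the coupled system and invoking the system comparison principle, so that $u_k\le U$ and $v_k\ge V$ are obtained in one stroke rather than by freezing $v_k$ as a coefficient. The crucial ingredient is a one-dimensional profile for $U$ (Theorem~\ref{thm:main-one}) that is far from a plain exponential: along a slowly advancing front $y=\rho(x)+st+\tfrac{3d}{4}$ the function dips to $k^{-2}$ and then rises via a $\cosh(\sqrt{\gamma k}\,\cdot\,)$ piece, and the exponent condition enters exactly in showing that $\int_{\text{layer}} kU^{m_3}<1$ (Lemma~\ref{lem:cosh}), so that the companion $V$ stays bounded below across the transition layer. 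For $m_3=1$, $m_4\ge 2$ a separate profile (Lemma~\ref{lem:supersol}) is built so that $U''\le kU\bigl((m_4{-}1)k\!\int U+b\bigr)^{-1/(m_4-1)}$, matching the self-limiting decay of the $v$-ODE. A stationary exponential barrier with $v_k$ treated as a frozen coefficient cannot reproduce either mechanism.
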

The condition \( m_3 > 1 \) or \( m_4 \ge 2 \) ensures sufficient reaction strength in 
the \( v_k \) equation to prevent the interface from persisting, allowing \( u_k \) to vanish rapidly 
in \( \supp(v_0) \) as \( k \to \infty \). This is essential for the emergence of the sharp interface 
separating the diffusive and non-diffusive regions.

We note that the case \( m_3 > 1 \) was also treated in~\cite{Iida2017}, 
but their analysis relied on the additional assumption that the limiting function, 
defined as a weak solution, is smooth.
This assumption was introduced 
as part of an adaptation of arguments originally developed for the balanced case, 
and can be considered somewhat artificial in this case.
Our approach builds on this framework by developing a direct method that avoids 
auxiliary assumptions on the limiting solution. 
By constructing an explicit subsolution adapted to the geometry of the support of \( v_0 \), 
we obtain sharp estimates 
for the reaction term and show uniform suppression of \( u_k \) in the reactive region. 
This leads to the convergence result stated in Theorem~\ref{thm:main}, 
with no need to assume regularity of the limit a priori.

In this study, we develop a direct analytical framework for the fast reaction limit 
based on explicitly constructed sub- and supersolutions.
In particular, we construct one-dimensional supersolutions adapted to the asymmetry and exponent 
structure of the reaction terms, and show that \(u_k\) is uniformly suppressed in the reactive region.
This construction is then extended to higher-dimensional domains and combined with 
a patching-type comparison principle to complete the proof of convergence.

The remainder of this paper is organized as follows.
Section 2 develops a comparison principle tailored to the singular structure of the system. 
We establish Theorem \ref{theo:comparison principle}, a comparison principle for weak sub- and supersolutions in the unbalanced setting,  
and prove Theorem \ref{combine functions}, a patching result that enables the extension of local supersolutions to general domains.
In Section 3, we construct explicit one-dimensional supersolutions in Lemmas \ref{lem:cosh} and \ref{lem:supersol}, 
depending on the reaction exponents \(m_3\) and \(m_4\). These are then combined in 
Theorem \ref{thm:main-one} to produce a more tractable supersolution. Using Theorem \ref{combine functions}, 
we extend this construction to the whole domain, completing the proof of 
Theorem~\ref{thm:main}.
	\section{Comparison principle and auxiliary functions}
We begin by summarizing the basic properties of the solution \( (u_k, v_k) \) to system \eqref{eq:main system}, 
as well as introducing the function spaces and notation used throughout the paper.

According to \cite{Hilhorst2001}, for each \( k > 0 \), there exists a unique solution \( (u_k, v_k) \) of \eqref{eq:main system} satisfying
\begin{align*} 
u_k &\in C([0,T]; C(\overline{\Omega})) \cap C^1((0,T]; C(\overline{\Omega})) \cap C((0,T]; W^{2,p}(\Omega)), \\
v_k &\in C^1([0,T]; L^\infty(\Omega)), \\
0 &\leq u_k \leq M_u, \quad 0 \leq v_k \leq M_v \quad \text{in } Q_T.
\end{align*}

For \( A \subset Q_T \) and \( \tau > 0 \), let
\[
  A_\tau := \{ (x,t) \in A \mid t \ge \tau \}.
\]
We define the functional spaces
\begin{align*}
\mathcal{U}(A) &:= \left\{ u \in L^\infty(A) \mid
  \partial_t u, \nabla u \in L^2(A_\tau)\ \text{for every } \tau >0 \right\}, \\
\mathcal{V}(A) &:= \left\{ v \in L^\infty(A) \mid
  \partial_t v \in L^1(A)\right\}.
\end{align*}
Let $f\in L^2(A)$.  We say that a function $u\in \mathcal{U}(A)$ satisfies
\[
\partial_tu \ge \Delta u - f(x,t)
\quad\text{in }A
\]
in the weak formulation if, for every nonnegative test function $\phi\in C^\infty_c(A)$,
\begin{align*}
\int_A\bigl(-u\partial_t\phi + \nabla u\cdot\nabla\phi + f(x,t)\phi\bigr)\, dx\, dt
\ge 0.
\end{align*}
If $A=Q_T$ and, in addition, one imposes the homogeneous Neumann condition $\partial_\nu u=0$ on $\partial\Omega$, 
the test-function class is extended to all of \(C^\infty(\overline{Q_T})\).  
In this case, a function $u \in \mathcal{U}(A)$ is said to satisfy
\[
\partial_tu \ge \Delta u - f(x,t),
\quad
\partial_\nu u=0
\quad\text{in }Q_T
\]
if for every nonnegative test function $\phi \in C^\infty(\overline{Q_T})$, the inequality
\begin{align*}
&\int_{Q_T}\bigl(-u\partial_t\phi + \nabla u\cdot\nabla\phi + f(x,t)\phi\bigr) \, dx\,dt\\
&+\int_{\Omega} u(x,T)\phi(x,T)- u(x,0)\phi(x,0)\,dx
\ge 0.
\end{align*}
holds. Replacing the inequality “\(\ge\)” by “\(\le\)” or “\(=\)” in the integral formulation gives
 the corresponding weak subsolution or solution definition, respectively. 
The same weak formulation applies to every \(v \in \mathcal{V}(A)\), 
with the sole difference that the spatial gradient term \(\nabla v \cdot \nabla \phi\) is omitted.

The following comparison principle can be established by adapting 
standard techniques developed for coupled cooperative systems (see, e.g., \cite{Boyadzhiev2025,hilhorst2000nonlinear}).
 Although the assumptions in our setting differ slightly from those in the
  referenced works, the structure of the system remains within the same
	 general framework, so the proof requires only minor modifications.
	 \begin{theo} \label{theo:comparison principle}
		Let $u, \tilde{u} \in \mathcal{U}(Q_T)$
		and $v, \tilde{v} \in \mathcal{V}(Q_T)$  be nonnegative functions. 
Assume they satisfy the inequalities
		\begin{equation} \label{eq:comparison principle}
			\begin{aligned}
				\partial_t u &\geq \Delta u - k uv, \quad
				\partial_t \tilde{u} \leq \Delta \tilde{u} - k \tilde{u} \tilde{v} &&\text{in } Q_T, \\
				\partial_t v &\leq -k u^{m_3} v^{m_4}, \quad
				\partial_t \tilde{v} \geq -k \tilde{u}^{m_3} \tilde{v}^{m_4} &&\text{in } Q_T, \\
				\partial_\nu u &= \partial_\nu \tilde{u}=0 &&\text{on } \partial \Omega \times (0,T), \\
				u(x,0) &\geq \tilde{u}(x,0), \quad v(x,0) \leq \tilde{v}(x,0) &&\text{for a.e. } x \in \Omega.
			\end{aligned}
			\end{equation}
			Then
			\begin{align*}
				u \geq \tilde{u} , v \leq \tilde{v} \quad  \text{in } Q_T.
			\end{align*}
		\end{theo}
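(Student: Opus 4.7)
My plan is to run a standard $L^2$ energy estimate on the positive parts of the two differences and close it by Grönwall's inequality.

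Set $U:=\tilde u-u$ and $V:=v-\tilde v$; the goal becomes showing $U^+\equiv 0$ and $V^+\equiv 0$. Subtracting the inequalities in \eqref{eq:comparison principle} gives, in the weak sense,
\[
\partial_t U\le\Delta U-k\bigl(\tilde u\tilde v-uv\bigr),\qquad
\partial_t V\le -k\bigl(u^{m_3}v^{m_4}-\tilde u^{m_3}\tilde v^{m_4}\bigr).
\]
Direct algebra gives $\tilde u\tilde v-uv=\tilde v\,U-u\,V$, and a mean-value expansion in each variable separately rewrites $u^{m_3}v^{m_4}-\tilde u^{m_3}\tilde v^{m_4}=\alpha V-\gamma U$ with bounded nonnegative measurable coefficients $\alpha=m_4 u^{m_3}\xi^{m_4-1}$ and $\gamma=m_3\tilde v^{m_4}\eta^{m_3-1}$, where $\xi$ lies between $v$ and $\tilde v$ and $\eta$ between $u$ and $\tilde u$; all are bounded because $u,\tilde u,v,\tilde v\in L^\infty$. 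The essential structural point is that the off-diagonal coupling coefficients $ku\ge0$ (in the $U$-equation) and $k\gamma\ge 0$ (in the $V$-equation) have \textbf{cooperative} signs, which is exactly what makes the Grönwall closure below succeed.

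Next I would test the $U$-inequality against $U^+$ and the $V$-inequality against $V^+$. The Neumann boundary term drops by hypothesis, and the initial term vanishes since $U(\cdot,0)\le0$ and $V(\cdot,0)\le0$. The diagonal reaction contributions $-\int k\tilde v(U^+)^2$ and $-\int k\alpha(V^+)^2$ are non-positive and can simply be discarded. The off-diagonal terms are controlled by $uV\cdot U^+\le M_u V^+U^+$ (using $u\ge0$ and $U^+\ge0$) and $\gamma U\cdot V^+\le\|\gamma\|_\infty U^+V^+$, followed by Young's inequality. Setting $E(t):=\tfrac12\int_\Omega\bigl((U^+)^2+(V^+)^2\bigr)\,dx$, I arrive at $E'(t)\le C_k\,E(t)$ with $E(0)=0$, and Grönwall delivers $E\equiv 0$.

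The main technical obstacle, and the reason the result requires adaptation rather than verbatim quotation of \cite{Boyadzhiev2025,hilhorst2000nonlinear}, is justifying the use of $U^+$ and $V^+$ as test functions: the admissible class in the paper's weak formulation is $C_c^\infty(A)$ (or $C^\infty(\overline{Q_T})$ for the Neumann variant), whereas $v,\tilde v$ have only $\partial_t v\in L^1$ and no spatial regularity, and $u,\tilde u$ give $\partial_tU,\nabla U\in L^2_{\mathrm{loc}}$ rather than on all of $Q_T$. I would handle this by Steklov averaging in time together with a standard mollification so that the truncated differences appear as limits of admissible test functions, and then pass to the limit, first on $[\tau,T]$ and then sending $\tau\downarrow 0$ using the initial data. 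The cooperative sign structure identified above guarantees that every approximation step preserves the direction of the inequality, so the Grönwall closure survives the limiting procedure.
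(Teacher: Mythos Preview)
Your proposal is correct and matches the paper's approach almost exactly: positive parts of the differences as test functions, the cooperative sign structure used to replace $V$ by $V_+$ and $U$ by $U_+$ in the cross terms, Young's inequality, and an exponential/Gr\"onwall closure. The only cosmetic difference is that the paper builds the exponential weight into the test functions, taking $\phi=U_+e^{-Lt}$ and $\psi=V_+e^{-Lt}$ with $L$ chosen large, rather than invoking Gr\"onwall afterward; the approximation issue you flag is handled there in a single clause (``approximate them by smooth functions and pass to the limit'').
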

		\begin{proof}
			Since $ u, \tilde{u} \in \mathcal{U}(Q_T) $ and $ v, \tilde{v} \in \mathcal{V}(Q_T) $, 
			all functions are uniformly bounded in $ Q_T $; that is, there exists a constant $ M > 0 $ such that
\begin{align*}
	\|u\|_{L^\infty(Q_T)} + \|\tilde{u}\|_{L^\infty(Q_T)} + \|v\|_{L^\infty(Q_T)} + \|\tilde{v}\|_{L^\infty(Q_T)} \leq M.
\end{align*}
			Define $U =  \tilde{u}-u$ and $V = v-\tilde{v} $. 
			We also set $U_+ = \max(U, 0)$ and $V_+ = \max(V, 0)$.
			We will show that $U_+$ and $V_+$ are identically zero.
			By \eqref{eq:comparison principle}, the following inequalities hold for any nonnegative 
			test functions $\phi, \psi \in C^{\infty}(\overline{Q_T})$:
			\begin{align} \label{eq:compari 2}
				\begin{split}
					\int_{Q_T} \left( -U  \partial_t \phi + \nabla U \cdot \nabla \phi + k ( \tilde{u} \tilde{v}-uv) \phi \right) \,  dx \,  dt
					+ \int_{\Omega} U(x,T) \phi(x,T)  \, dx&\leq 0, \\
					\int_{Q_T} \left( -V  \partial_t \psi + k (u^{m_3} v^{m_4}-\tilde{u}^{m_3} \tilde{v}^{m_4} ) \psi \right)  \,dx \,  dt 
					+ \int_{\Omega} V(x,T)  \psi(x,T)   \, dx&\leq 0.
				\end{split}
			\end{align}
			We now choose $\phi = U_+ e^{-Lt}$ and $\psi = V_+ e^{-Lt}$, where $L > 0$ is a sufficiently large constant to be determined later.
			Since they are not smooth, we approximate them by smooth functions and pass to the limit.
			Then \eqref{eq:compari 2} yields the following inequalities:
			\begin{align} \label{eq:compari 3}
				\begin{split}
					0 \geq& \int_{Q_T} e^{-Lt} \left( LU_+^2 -\frac{1}{2}\partial_t (U_+^2) + |\nabla U_+|^2  + k ( \tilde{u} \tilde{v}-uv) U_+ \right)  \,dx \,dt
					\\ &+ \int_{\Omega} U_+^2(x,T) e^{-LT}  \, dx \\
					\geq&\int_{Q_T} e^{-Lt} \left( \frac{LU_+^2}{2}   + k ( \tilde{u} \tilde{v}-uv) U_+  \right)  \,dx \,dt
					+ \int_{\Omega} \frac{U_+^2 e^{-LT}}{2}  \, dx, \\
					0  \geq& \int_{Q_T} e^{-Lt} \left( \frac{LV_+^2}{2} + k (u^{m_3} v^{m_4}-\tilde{u}^{m_3} \tilde{v}^{m_4} ) V_+  \right)  \,dx \,dt 
					+ \int_{\Omega} \frac{V_+^2 e^{-LT}}{2}  \, dx.
				\end{split}
			\end{align}
			We now estimate the terms arising from the reaction coupling. For the first term, applying Young's inequality,
			 we obtain
			\begin{align}\label{eq:compari 4}
			U_+(\tilde{u} \tilde{v} - uv) 
			&= U_+(\tilde{v}(\tilde{u} - u) + u(\tilde{v} - v) )\\
			&= \tilde{v} U_+^2 - u U_+V \\
			&\geq  - u U_+V_+ \\
			&\geq - \frac{M}{2} (U_+^2 + V_+^2).
			\end{align}
			For the second term, using Young's inequality and  mean-value theorem, we have
			\begin{align}\label{eq:compari 5}
				\begin{split}
									V_+(u^{m_3} v^{m_4}-\tilde{u}^{m_3} \tilde{v}^{m_4})
				&= V_+(u^{m_3} (v^{m_4} - \tilde{v}^{m_4}) + \tilde{v}^{m_4} (u^{m_3} - \tilde{u}^{m_3})) \\
				&\geq - \tilde{v}^{m_4}m_3 M^{m_3-1} U_+ V_+\\
				&\geq - \frac{m_3 M^{m_3+m_4-1}}{2} (U_+^2 + V_+^2) .
				\end{split}
				\end{align}
			Combining \eqref{eq:compari 3}, \eqref{eq:compari 4}, and \eqref{eq:compari 5}, we obtain
			\begin{align*}
				0\geq& \int_{Q_T}  \frac{Le^{-Lt}}{2}(U_+^2+V_+^2) - k e^{-Lt}\left(\frac{M}{2}+
				\frac{m_3 M^{m_3+m_4-1}}{2}\right)(U_+^2+V_+^2)   \,dx\, dt.
			\end{align*}
			By setting $L=2k(M+m_3 M^{m_3+m_4-1})$, we obtain 
			\begin{align}
				0\geq& \int_{Q_T}  \frac{Le^{-Lt}}{4}(U_+^2+V_+^2)  \,dx \,dt.
			\end{align}
			This implies that $U_+ = 0$ and $V_+ = 0$ in $Q_T$.
			Thus, we have $u \geq \tilde{u}$ and $v \leq \tilde{v}$ in $Q_T$.
		\end{proof}
		In order to develop the comparison principle on a finite covering, 
		we first prove the following preliminary result for two solution pairs on a common subdomain.
\begin{lemma} \label{lem:2}
		Let $k>0$, $m_3\ge 1$, and $m_4\ge 1$ be constants. 
		Let $Q_T=\Omega\times(0,T)$, where $\Omega$ is a bounded domain in $\R^n$ with Lipschitz boundary.
	Let $A \subset Q_T$ be a subdomain with Lipschitz boundary. Suppose
	\begin{align*}
		u_i\in \mathcal{U}(A), 
\quad
v_i\in \mathcal{V}(A),
\qquad
i=1,2,
	\end{align*}
	be nonnegative functions,
	and that in $A$ each pair $(u_i,v_i)$ satisfies
\begin{align*}
	\partial_t u_i &\ge \Delta u_i - ku_iv_i,\\
\partial_t v_i &\le -\,k\,u_i^{m_3}v_i^{m_4}.
	\end{align*}
	Define the index map
			\begin{align*}
				\ell(x,t):=\begin{cases}
					1, & u_1(x,t)\le u_2(x,t),\\
					2, & u_2(x,t)< u_1(x,t),
					\end{cases}
			\end{align*}
			and set $u= u_{\ell(x,t)}$ in $A$. If $v_{\ell(x,t)}$ is decreasing in $t$ for any $x\in A$,
			then there exists a function $v \in \mathcal{V}(A)$ such that
			\begin{align*}
				\partial_t u &\geq \Delta u - k uv , \\
				\partial_t v &\leq -k u^{m_3} v^{m_4} . 
			\end{align*}
\end{lemma}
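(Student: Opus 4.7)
The natural candidate is $u(x,t) := u_{\ell(x,t)}(x,t) = \min(u_1,u_2)(x,t)$ paired with $v(x,t) := v_{\ell(x,t)}(x,t)$. I would verify in order (a) the regularity $u \in \mathcal{U}(A)$ and $v \in \mathcal{V}(A)$, (b) the weak supersolution inequality $\partial_t u \ge \Delta u - kuv$, and (c) the weak inequality $\partial_t v \le -k u^{m_3} v^{m_4}$.

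For (a), the regularity of $u$ is immediate from the Sobolev chain rule applied through $\min(a,b) = (a+b-|a-b|)/2$: since $u_1,u_2\in\mathcal{U}(A)$, so is $u$, with $\nabla u,\partial_t u\in L^2(A_\tau)$ for every $\tau>0$. For $v$, the hypothesis that $t\mapsto v_{\ell(x,t)}(x,t)$ is decreasing for each fixed $x$ rules out upward jumps; combined with the pointwise bound $|\partial_t v|\le|\partial_t v_1|+|\partial_t v_2|$ on the two pieces, this yields $\partial_t v\in L^1(A)$, up to a $t$-mollification near the switching set should atomic parts appear.

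For (b), on the open pieces $A_i^\circ := \mathrm{int}\{\ell=i\}$ one has $u=u_i$ and $v=v_i$, so the inequality reduces to the standing hypothesis on $(u_i,v_i)$. To extend it across the interface $\Sigma := \{u_1=u_2\}$, I would invoke Kato's inequality in the form $\Delta(u_1\wedge u_2) \le \Delta u_{\ell(x,t)}$ in the distributional sense (via the decomposition $u_1\wedge u_2 = u_1-(u_1-u_2)_+$ together with Kato for $(\cdot)_+$), which gives $\partial_t u - \Delta u \ge \partial_t u_\ell-\Delta u_\ell \ge -ku_\ell v_\ell = -kuv$. For (c), on each $A_i^\circ$ the inequality $\partial_t v_i\le -k u_i^{m_3}v_i^{m_4}$ reads directly as $\partial_t v\le -k u^{m_3}v^{m_4}$; across $\Sigma$ the monotonicity hypothesis ensures that the only extra contribution to the distributional $\partial_t v$ is nonpositive (downward jumps in $t$), which only strengthens the inequality.

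The main obstacle is the interface $\Sigma$. For the $u$-equation, one must check that the reaction coupling $-kuv$ does not obstruct the classical min-of-supersolutions gluing; it does not, because the relevant $v$-value cannot jump up across $\Sigma$ by hypothesis, so the right-hand side only becomes larger, and the Kato inequality above absorbs any singular part of $\Delta u$ concentrated on $\Sigma$. For the $v$-equation the interface contribution is exactly what the monotonicity hypothesis on $v_{\ell(x,t)}$ is tailored to control. The delicate technical point is that $\mathcal{V}(A)$ requires $\partial_t v\in L^1(A)$ rather than merely a signed measure; if a nontrivial singular part is generated at $\Sigma$, I would mollify $v$ in $t$ to obtain $v_\eta$, verify both weak inequalities for $(u,v_\eta)$ up to $o(1)$ errors, and pass to the limit $\eta\to 0$.
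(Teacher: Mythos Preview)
Your argument for the $u$-inequality (part (b)) is essentially the paper's: the $\eta_\varepsilon$-cutoff the paper uses is precisely the standard way to make the ``Kato / min-of-supersolutions'' heuristic rigorous when $\Delta u_i$ is only defined in the weak $\mathcal{U}(A)$ sense. So that part is fine, though the intermediate expression ``$\partial_t u_\ell - \Delta u_\ell$'' is not a well-defined distribution and should be replaced by the direct conclusion $\partial_t u - \Delta u \ge -k u\,v_{\ell}$ in the weak sense.

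The genuine gap is in the choice of $v$. Taking $v = v_{\ell(x,t)}$ does \emph{not} yield a function in $\mathcal{V}(A)$: whenever $\ell$ switches in $t$ and $v_1 \ne v_2$ at the switch, the distributional time derivative $\partial_t v_\ell$ acquires a (negative) Dirac mass, so $\partial_t v_\ell \notin L^1(A)$. Your mollification proposal does not repair this. For fixed $\eta>0$ the mollified $v_\eta$ is indeed in $\mathcal{V}(A)$, but the nonlinear inequality $\partial_t v_\eta \le -k u^{m_3} v_\eta^{m_4}$ does not survive convolution: you would need $\int u(\cdot,t-s)^{m_3} v_\ell(\cdot,t-s)^{m_4}\rho_\eta(s)\,ds \ge u(\cdot,t)^{m_3} v_\eta(\cdot,t)^{m_4}$, which is a reverse Jensen-type bound that fails in general. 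And sending $\eta\to 0$ only returns you to $v_\ell$, with the same regularity defect. So there is no single $v\in\mathcal{V}(A)$ produced by this route.

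The paper circumvents this entirely by \emph{not} using $v_\ell$ as the partner function. Instead it solves, for each $x$, the ODE
\[
\partial_t v = -k\,u^{m_3} v^{m_4},\qquad v\bigl(x,t_0(x)^+\bigr)=\lim_{s\downarrow t_0(x)} v_{\ell(x,s)}(x,s),
\]
giving an explicit closed form for $v$ which lies in $\mathcal{V}(A)$ by construction and satisfies $\partial_t v = -k u^{m_3} v^{m_4}$ exactly. The monotonicity hypothesis on $t\mapsto v_{\ell(x,t)}$ is then used, not to control jumps of $v$, but to prove the pointwise comparison $v \ge v_{\ell}$: each switch in $\ell$ can only replace $v_\ell$ by something smaller, while $v$ evolves by the same ODE without jumps. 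Combining this with the $u$-side inequality $\partial_t u - \Delta u \ge -k u\,v_\ell$ (your Kato step) and $u\ge 0$ gives $\partial_t u - \Delta u \ge -k u v$, completing the proof.
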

\begin{proof}
	Let
	\[
	\pi_x(A)=\bigl\{x\in\Omega \mid A\cap(\{x\}\times(0,T])\neq\emptyset\bigr\},
	\]
	and define the “first‐entry” time
	\[
	t_0\colon \pi_x(A)\to[0,T), 
	\quad
	t_0(x)=\inf\bigl\{t\in(0,T]\mid(x,t)\in A\bigr\}.
	\]
	Since \(v_{\ell(x,t)}\) is nonincreasing in \(t\) and uniformly bounded on \(A\), the limit
	\[
	v_0(x)=\lim_{s\downarrow t_0(x)}v_{\ell(x,s)}(x,s)
	\]
	exists and defines \(v_0\in L^\infty(\pi_x(A))\). 
	We next introduce the notation
	\begin{align*}
		I(x,t)
=\int_{A\cap(\{x\}\times(t_0(x),t])}
u^{m_3}(x,\tau)\,d \tau.
	\end{align*}
	We then set
	\begin{align*}
	v(x,t)=
	\begin{cases}
	v_0(x)\exp\!\bigl(-kI(x,t) \bigr), & m_4=1,\\[6pt]
	\bigl((m_4-1)kI(x,t) + v_0(x)^{1-m_4}\bigr)^{-1/(m_4-1)} & m_4>1.
	\end{cases}
	\end{align*}
	From the construction of $v$, it is immediate that
	\begin{align*}
		&v\in \mathcal{V}(A),\\
		&\partial_t v = -ku^{m_3}v^{m_4}\quad\text{in }A.
	\end{align*}
	Moreover, since $v_{\ell(x,t)}$ is nonincreasing in $t$, any switch in the index \(\ell(x,t)\) 
	can only replace \(v_{\ell(x,t)}\) by an equal or smaller value.  Therefore,
	\[
	v(x,t)\ge  v_{\ell(x,t)}(x,t)
	\quad\text{in } A.
	\]
	It remains to verify the weak form of
	$\partial_t u\ge \Delta u - kuv$.
	Next, let $\phi\in C^\infty_c(A)$ be any nonnegative 
	test function. Fix $1 >\e >0$ and define
	\begin{align*}
		\eta_\e(x,t)=
		\begin{cases}
		0 & u_1\le u_2,\\
		\frac{u_1-u_2}{\e} & u_2< u_1< u_2+\e,\\
		1 & u_1\ge u_2+\e.
		\end{cases}
	\end{align*}
We define \(\phi_1 = (1 - \eta_\e)\phi\) and \(\phi_2 = \eta_\e \phi\).  
Since \(\eta_\e\) is only Lipschitz, we approximate \(\phi_1\) and \(\phi_2\) by smooth test functions, substitute them into the weak formulations for \(u_1\) and \(u_2\), and then pass to the limit.  
Summing the resulting inequalities gives
\begin{align*}
	0 \leq& \int_A \left( -u_1 \partial_t \phi_1 + \nabla u_1 \cdot \nabla \phi_1 + k u_1 v_1 \phi_1 \right)
	  \,dx \,dt\\
	&+\int_A \left( -u_2 \partial_t \phi_2 + \nabla u_2 \cdot \nabla \phi_2 + k u_2 v_2 \phi_2 \right)
	  \,dx \,dt.
\end{align*}
Consequently,
\begin{align}\label{eq:lem1}
	\begin{split}
		0 \leq& \int_A \left( -u_1 (1-\eta_\e)\partial_t \phi-u_2 \eta_\e \partial_t \phi \right)\,dx \,dt\\
		&+\int_A \left( (1-\eta_\e) \nabla u_1 \cdot \nabla \phi + \eta_\e \nabla u_2 \cdot \nabla \phi \right) \,dx \,dt\\
		&+ \int_A \phi (u_1-u_2)  \partial_t \eta_\e \,dx \,dt
		+ \int_A \phi \nabla \left( u_2-u_1  \right) \cdot \nabla \eta_\e \,dx \,dt\\
		&+ \int_A \left( k u_1 v_1 (1-\eta_\e) + k u_2 v_2 \eta_\e \right) \phi \,dx \,dt.
	\end{split}
\end{align}
By integration by parts and the definition of $\eta_\e$, the third and fourth terms become
\begin{align}\label{eq:lem2}
	\begin{split}
		&\int_A \phi (u_1-u_2)  \partial_t \eta_\e \,dx \,dt
		+ \int_A \phi \nabla \left( u_2-u_1  \right) \cdot \nabla \eta_\e \,dx \,dt\\
	=&\int_A \frac{ \e}{2} \phi \partial_t (\eta_\e^2)-\phi \e |\nabla \eta_\e|^2 \,dx \,dt\\
	\leq&\frac{ \e}{2} \int_A \eta_\e^2 \partial_t \phi \,dx \,dt.
	\end{split}
\end{align}
Since $\eta_\e^2\le1$ and $\partial_t\phi\in L^\infty(A)$, the right–hand side tends to zero as $\e \to0$.
 Moreover, since $v\geq v_{\ell(x,t)}$, the last term \eqref{eq:lem1} satisfies
\begin{align}\label{eq:lem3}
	\begin{split}
		&\int_A \left( k u_1 v_1 (1-\eta_\e) + k u_2 v_2 \eta_\e \right) \phi \,dx \,dt\\
		\leq& \int_A k\left(  u_1  (1-\eta_\e) +  u_2  \eta_\e \right)v \phi \,dx \,dt.
	\end{split}
\end{align}
Combining \eqref{eq:lem1}, \eqref{eq:lem2}, and \eqref{eq:lem3}, we obtain
\begin{align*}
	0&\leq \liminf_{\e\to 0} \bigg[\int_A \left( -u_1 (1-\eta_\e)\partial_t \phi-u_2 \eta_\e \partial_t \phi \right)\,dx \,dt\\
	&+\int_A \left( (1-\eta_\e) \nabla u_1 \cdot \nabla \phi + \eta_\e \nabla u_2 \cdot \nabla \phi \right) \,dx \,dt\\
	&+ \int_A k\left(  u_1  (1-\eta_\e) +  u_2  \eta_\e \right)v \phi \,dx \,dt \bigg]\\
	&\leq \int_A(-u\partial_t\phi + \nabla u\cdot\nabla\phi + kuv\phi)\, dx\, dt.
\end{align*}
The theorem is proved.
\end{proof}
From the above lemma, we now extend the comparison principle inductively to an arbitrary finite covering.
		\begin{theo} \label{combine functions}
			Let $\{A_i\}_{i=1}^N \subset Q_T$ be a finite collection of subdomains with Lipschitz boundaries, and set
			\begin{align*}
				A=\bigcup_{i=1}^N A_i.
			\end{align*}
			For each $i=1, \ldots, N$, let $u_i \in \mathcal{U}(A_i)$,
			 $v_i \in \mathcal{V}(A_i)$ be nonnegative functions, and assume on $A_i$ that
			 \begin{align*}
					\partial_t u_i &\geq \Delta u_i - k u_iv_i ,\\
					\partial_t v_i &\leq -k u_i^{m_3} v_i^{m_4} .
				\end{align*}
			Define the index map
			\begin{align*}
				\ell(x,t):=\min\bigl\{i\in\{1,\dots,N\}\mid u_i(x,t)=\min_{1\leq j\leq N}u_j(x,t)\bigr\},
			\end{align*}
			and set $u(x,t):= u_{\ell(x,t)}(x,t)$ in $A$. If $v_{\ell(x,t)}$ is nonincreasing in $t$ for any $x\in A$,
			$u$ is continuous in $A$, and the following condition holds: For any compact set $B \subset\subset A$
			there exists a constant $\delta >0$ and an open neighborhood $\tilde{B} \subset A$ of 
			$B \cap \bigcup_{i=1}^N \partial A_i$ such that
			\begin{align}\label{deltabound}
				u(x,t) + \delta < \min_{i \ne \ell(x,t)} u_i(x,t) \quad  (x,t) \in \tilde{B}.
			\end{align}
			Then there exists a function $v \in \mathcal{V}(A)$ such that
			\begin{align*}
				\partial_t u &\geq \Delta u - k uv, \\
				\partial_t v &\leq -k u^{m_3} v^{m_4}. 
			\end{align*}
			\end{theo}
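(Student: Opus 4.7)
My approach is to generalize the argument of Lemma~\ref{lem:2} from two functions on a common subdomain to $N$ functions on a finite cover. The condition \eqref{deltabound} is the critical ingredient: it supplies the margin needed to build a Lipschitz partition of unity whose supports lie strictly inside the corresponding $A_i$, which is exactly what is required to substitute into each weak formulation separately. I note that a naive induction on $N$ (peeling off one $A_i$ at a time) does not work, because the monotonicity hypothesis on $v_{\ell(x,\cdot)}$ need not transfer to the reduced index map associated with a subcollection.

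First, I construct $v$ by the same ODE prescription as in Lemma~\ref{lem:2}: along each vertical line $\{x\}\times(t_0(x),T]\subset A$, define $v(x,\cdot)$ as the solution of $\partial_t v=-ku^{m_3}v^{m_4}$ with initial datum $\lim_{s\downarrow t_0(x)} v_{\ell(x,s)}(x,s)$. The nonincreasing hypothesis on $v_{\ell(x,\cdot)}$ makes this initial datum well defined and guarantees that $v(x,t)\ge v_{\ell(x,t)}(x,t)$ throughout $A$, since any switch of the index map can only replace $v_{\ell}$ by a smaller value. Hence $v\in\mathcal{V}(A)$ and $\partial_t v=-ku^{m_3}v^{m_4}$ in $A$ by construction.

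For the weak inequality for $u$, fix a nonnegative test function $\phi\in C^\infty_c(A)$ with compact support $B$, apply \eqref{deltabound} to obtain $\tilde B$ and $\delta>0$, and for $0<\e<\delta$ set
\[
\eta_i^\e(x,t)=\frac{[\e-(u_i(x,t)-u(x,t))]_+}{\sum_{j:\,(x,t)\in A_j}[\e-(u_j(x,t)-u(x,t))]_+}\quad\text{on }A_i,
\]
extended by zero outside $A_i$. The denominator is positive on $A$ because the index $\ell(x,t)$ always contributes a term equal to $\e$. These $\eta_i^\e$ are Lipschitz as compositions of a Lipschitz map with the $u_j$, form a partition of unity on $A$, and by \eqref{deltabound} together with the continuity of $u$, $\supp\eta_i^\e\cap B$ stays strictly inside $A_i$ and away from $\partial A_i$. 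After standard mollification I substitute $\eta_i^\e\phi$ into the weak formulation for $u_i$ on $A_i$ and sum over $i$; using $v_i\le v$ in the reaction terms, the expansion via the Leibniz rule separates into a principal part that converges to $\int_A(-u\,\partial_t\phi+\nabla u\cdot\nabla\phi+kuv\phi)\,dx\,dt$ plus cross terms involving $\partial_t\eta_i^\e$ and $\nabla\eta_i^\e$.

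The main obstacle is to show that these cross terms vanish as $\e\to0$. Each of them is supported on the thin layer $\{0<u_i-u<\e\}$, where $|u_i-u|\le\e$ pointwise; using $|\nabla\eta_i^\e|=O(|\nabla(u_i-u)|/\e)$, an integration by parts analogous to \eqref{eq:lem2}, and a Young-type estimate, the cross terms can be bounded by an expression of order $\e$, which vanishes in the limit. Passing $\e\to0$ then yields the required weak inequality and completes the proof.
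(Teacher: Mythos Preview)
Your overall strategy---build a Lipschitz partition of unity $\eta_i^\e$ adapted to the levels $\{u_i-u<\e\}$, substitute $\eta_i^\e\phi$ into the weak formulation for each $u_i$, sum, and pass $\e\to0$---is the natural generalization of Lemma~\ref{lem:2}, and your observation that a naive induction on $N$ runs into trouble with the monotonicity hypothesis on $v_{\ell}$ is well taken. But the paper's proof avoids exactly the difficulty you create for yourself. For $N=2$ it does \emph{not} run the $\eta_\e$-argument across $\partial A_i$; instead it uses \eqref{deltabound} to produce a \emph{smooth, $\e$-independent} partition $\tilde\phi=\phi+\phi_1+\phi_2$ with $\supp\phi_i$ contained in a set where $u\equiv u_j$ identically for a single fixed $j$, and $\supp\phi\subset A_1\cap A_2$. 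On $\supp\phi_i$ the weak formulation for that single $u_j$ applies with no cross terms whatsoever; the $\eta_\e$-trick of Lemma~\ref{lem:2} is invoked only on the common overlap, where the two-function identity \eqref{eq:lem2} is available verbatim.

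The gap in your argument is the cross-term estimate. In Lemma~\ref{lem:2} the identity $(u_1-u_2)=\e\eta_\e$ on the transition layer turns the time cross term into $\tfrac{\e}{2}\partial_t(\eta_\e^2)$ (hence $O(\e)$ after integrating by parts) and the gradient cross term into $-\e|\nabla\eta_\e|^2\le0$, which can simply be discarded. Your normalized cutoffs $\eta_i^\e=[\e-(u_i-u)]_+/\sum_j[\e-(u_j-u)]_+$ do not satisfy any such identity: differentiating brings in the denominator $D=\sum_j[\e-w_j]_+$, and one obtains expressions like
\[
\sum_i\nabla w_i\cdot\nabla\eta_i^\e
=-\frac{1}{D}\sum_i\chi_i|\nabla w_i|^2
+\frac{1}{D^2}\Bigl(\sum_i(\e-w_i)\chi_i\nabla w_i\Bigr)\cdot\Bigl(\sum_j\chi_j\nabla w_j\Bigr),
\]
which has no evident sign, and a time cross term that after one integration by parts still contains $\int\phi\bigl(\sum_i\eta_i^\e\partial_tu_i-\partial_tu\bigr)$, which is \emph{not} $O(\e)$. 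The phrase ``an integration by parts analogous to \eqref{eq:lem2}, and a Young-type estimate'' does not cover this; a genuinely new computation is required, and you have not supplied it. A secondary issue is that your claim $\supp(\eta_i^\e\phi)\Subset A_i$ needs $\ell\neq i$ near $\partial A_i$, which follows from \eqref{deltabound} only together with continuity of the individual $u_j$---not assumed in the statement, though it does hold in every application in the paper.
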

			\begin{proof}
				We first treat the case \(N=2\); the general case follows by applying the same argument pairwise.
				Construct $v$ exactly as in Lemma \ref{lem:2}.
				Next, let \(\tilde\phi\in C^\infty_c(A)\) be any nonnegative 
	test function.  By \eqref{deltabound}, there exist open sets
	\[
	B_1,B_2\subset A
	\]
	and \(\delta>0\) such that
	\[
	\partial A_2\cap\supp\tilde\phi\subset B_1\subset A_1,
	\quad
	\partial A_1\cap\supp\tilde\phi\subset B_2\subset A_2,
	\]
	and
	\[
	u(x,t)+\delta<\min\{u_1(x,t),u_2(x,t)\}
	\quad\text{for }(x,t)\in B_1\cup B_2.
	\]
	Continuity of \(u\) then implies \(u=u_1\) on \(B_1\) and \(u=u_2\) on \(B_2\), and hence
	\[
	u_1(x,t)+\delta<u_2(x,t)\quad\text{on }B_1,
	\qquad
	u_2(x,t)+\delta<u_1(x,t)\quad\text{on }B_2.
	\]
	Since \(\supp\tilde\phi\) is compact and contained in the union
	\[
	(A_1\cap A_2)\cup\bigl((A_1\setminus A_2)\cup B_1\bigr)\cup\bigl((A_2\setminus A_1)\cup B_2\bigr),
	\]
	we may construct a nonnegative partition of unity
	$\phi,\phi_1,\phi_2\in C^\infty_c(Q_T)$ satisfying
	\begin{align*}
	\tilde\phi &= \phi + \phi_1 + \phi_2,\\
	\supp\phi &\subset A_1\cap A_2,\\
	\supp\phi_1 &\subset (A_1\setminus A_2)\cup B_1,\\
	\supp\phi_2 &\subset (A_2\setminus A_1)\cup B_2,
	\end{align*}
	and such that $u = u_i$ on $\supp\phi_i$ for $i=1,2$.  
	It follows immediately that, using $v\ge v_{\ell(x,t)}$ and the weak formulations on each $A_i$,
	\begin{align}\label{eq:compari 6}
		\begin{split}
			&\int_{Q_T} \left( -u  \partial_t (\phi_1+\phi_2) + \nabla u \cdot \nabla (\phi_1+\phi_2) 
		+ k u v (\phi_1+\phi_2) \right)  \,dx \,dt\\
		\geq &\int_{Q_T} \left( -u_1 \partial_t \phi_1 
		+ \nabla u_1 \cdot \nabla \phi_1 + k u_1 v_1 \phi_1 \right)  \,dx \,dt\\
		 &+\int_{Q_T} \left( -u_2 \partial_t \phi_2 + \nabla u_2 
		\cdot \nabla \phi_2 + k u_2 v_2 \phi_2 \right)  \,dx \,dt\\
		\geq &0.
		\end{split}
	\end{align}
	By Lemma \ref{lem:2} on the subdomain $A_1\cap A_2$, and since the construction of $v$
	is identical to that in Lemma \ref{lem:2}, we have
	\begin{align}\label{eq:compari 7}
		\int_{Q_T}\bigl(-u\partial_t\phi + \nabla u\cdot\nabla\phi + kuv\phi\bigr)\, dx\, dt \ge0.
		\end{align}
		Therefore, by \eqref{eq:compari 6} and \eqref{eq:compari 7}, the theorem is proved.
			\end{proof}

\section{Proof of Theorem \ref{thm:main}}
In this section we first construct explicit one-dimensional barriers and then use them to prove the theorem in the general case \(n \ge 1\).
Lemmas \ref{lem:cosh} and \ref{lem:supersol}, as well as Theorem \ref{thm:main-one}, are established under the one-dimensional assumption \(n = 1\), where \(\Omega \subset \mathbb{R}\).
The extension to higher dimensions is carried out at the end of the section.

We begin with an explicit supersolution based on the hyperbolic cosine function,
 corresponding to the exponent $m_3$.
\begin{lemma}\label{lem:cosh}
Let $m\ge 1$ and $a_1>0$.  Then there exists a constant $K_{1}=K_{1}(a_1,m)>0$
such that, for every $k\ge K_{1}$, there exists a number
\[
   \tilde{x}\in\bigl(k^{-1},k^{-\frac{1}{3}}\bigr)
\]
with the following properties.  Define
\[
   U(x):=k^{-2}\cosh\bigl(\sqrt{a_1k} x\bigr).
\]
Then
\begin{align}\label{eq:basic}
   U(0)=k^{-2},\quad
   &U'(0)=0,\quad
   U''(x)=a_1k\,U(x)>\sqrt{a_1k}|U'(x)|,\\
	 &\int_{-k^{-1}}^{\tilde{x}} k\,U(s)^{m}\,\mathrm{d}s<1,\label{eq:int}\\
	 	 &  \begin{cases} \label{eq:U}
      U'(\tilde{x})\ge \log k, & m>1,\\[4pt]
      U(\tilde{x})\ge 2\,k^{-\frac{2}{3}}, & m=1.
   \end{cases}
\end{align}
\end{lemma}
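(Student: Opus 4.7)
The plan is to verify the identities in \eqref{eq:basic} by direct computation and then to choose $\tilde x$ explicitly as the unique positive value at which equality holds in the relevant inequality of \eqref{eq:U}. The relations $U(0)=k^{-2}$, $U'(0)=0$, and $U''(x)=a_1 k\,U(x)$ follow at once from differentiating $\cosh$, and the inequality $U''(x)>\sqrt{a_1 k}\,|U'(x)|$ reduces to $\cosh(y)>|\sinh(y)|$, which always holds.

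I would define $\tilde x$ to be the unique solution on $(0,\infty)$ of $U'(\tilde x)=\log k$ when $m>1$, and of $U(\tilde x)=2k^{-2/3}$ when $m=1$; existence and uniqueness come from the strict monotonicity of $U$ and $U'$ on $[0,\infty)$ with $U'(0)=0$ and $U(0)=k^{-2}$. To check $\tilde x>k^{-1}$, I would evaluate at $x=k^{-1}$: the argument $\sqrt{a_1/k}$ is small, so $U(k^{-1})=k^{-2}(1+o(1))<2k^{-2/3}$ and $U'(k^{-1})=O(k^{-2})<\log k$ for $k$ large. To check $\tilde x<k^{-1/3}$, I would note that at $x=k^{-1/3}$ the argument $\sqrt{a_1}\,k^{1/6}$ is large, so $\cosh$ and $\sinh$ grow like $e^{\sqrt{a_1}\,k^{1/6}}/2$, which dominates both targets $2k^{4/3}$ and $k^{3/2}\log k/\sqrt{a_1}$ since $k^{1/6}$ eventually beats $\log k$.

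For \eqref{eq:int} I would split the interval at $0$. The piece on $[-k^{-1},0]$ contributes at most $k^{1-2m}\cdot k^{-1}\cosh^m(\sqrt{a_1/k})=O(k^{-2m})$ and is negligible. On $[0,\tilde x]$, in the case $m=1$ the integral equals $\sinh(\sqrt{a_1 k}\tilde x)/(k^{3/2}\sqrt{a_1})\le 2k^{4/3}/(k^{3/2}\sqrt{a_1})=O(k^{-1/6})$ via $\sinh\le\cosh$ and the defining relation for $\tilde x$. In the case $m>1$, the crude bound $\cosh^m(y)\le e^{my}$ yields
\[
k^{1-2m}\int_0^{\tilde x}\cosh^m(\sqrt{a_1 k}\,s)\,ds\le\frac{k^{1-2m}\,e^{m\sqrt{a_1 k}\tilde x}}{m\sqrt{a_1 k}},
\]
and $\sinh(\sqrt{a_1 k}\tilde x)=k^{3/2}\log k/\sqrt{a_1}$ implies $e^{\sqrt{a_1 k}\tilde x}\le Ck^{3/2}\log k$, so the right-hand side is of order $k^{(1-m)/2}(\log k)^m\to 0$. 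The delicate step is exactly this last estimate: the rapidly growing factor $\cosh^m(\sqrt{a_1 k}\tilde x)\sim k^{3m/2}(\log k)^m$ must be absorbed by the small prefactor $k^{1-2m}/\sqrt{a_1 k}$, which works precisely because the threshold in \eqref{eq:U} for $m>1$ is logarithmic rather than polynomial in $k$. Choosing $K_1$ so that each $o(1)$ quantity lies below $1$ completes the proof.
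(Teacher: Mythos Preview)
Your proof is correct and follows essentially the same approach as the paper: verify \eqref{eq:basic} by direct computation, choose $\tilde x$ explicitly, and control the integral via elementary bounds on $\cosh$. The only differences are tactical: for $m>1$ the paper pins down a target value of $U(\tilde x)$ and then checks $U'(\tilde x)\ge\log k$, whereas you set $U'(\tilde x)=\log k$ directly; and for the integral the paper bounds the integrand by $U(\tilde x)^m$ via monotonicity rather than by $e^{m\sqrt{a_1k}\,s}$ as you do.
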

\begin{proof}
The identities in \eqref{eq:basic} follow at once from direct computation.

Assume first that \(m>1\).  
Choose \(\tilde{x}>0\) so that \(U(\tilde{x}) = k^{-\frac12\sqrt{m}}\).
Because \(U\) is strictly increasing for \(x>0\),
\begin{align*}
	\tilde{x}
      &=\frac{1}{\sqrt{a_1k}}
        \cosh^{-1}\!\bigl(k^{2-\tfrac12\sqrt{m}}\bigr)\\
      &=\frac{1}{\sqrt{a_1k}}
        \log\!\Bigl(k^{2-\tfrac12\sqrt{m}}
            +\sqrt{k^{4-\sqrt{m}}-1}\Bigr)
      \le \frac{\log(2k^{2})}{\sqrt{a_1k}},
\end{align*}
and, for \(k\) large, \(k^{-1}<\tilde{x}<k^{-1/3}\).
Since \(U\) is even, this gives
\[
   \int_{-k^{-1}}^{\tilde{x}} k\,U(s)^{m}\,ds
      \le 2\int_{0}^{\tilde{x}} k\,U(s)^{m}\,ds
      \le 2\,\tilde{x}\,k\,U(\tilde{x})^{m}
      \le \frac{2\log(2k^{2})}{\sqrt{a_1}}\,
           k^{\frac12-\frac12\sqrt{m}}.
\]
Because \(\tfrac12-\tfrac12\sqrt{m}<0\), the right-hand side
tends to zero as \(k\to\infty\), so \eqref{eq:int} holds for all sufficiently large \(k\).
Moreover,
\begin{align*}
	 U'(\tilde{x})
      &=k^{-2}\sqrt{a_1k}\sinh(\sqrt{a_1k}\tilde{x})\\
      &=k^{-2}\sqrt{a_1k}\sqrt{k^{4-\sqrt{m}}-1}
      \ge \frac{\sqrt{a_1}}{2}
           k^{\frac12-\frac12\sqrt{m}}
      \ge \log k,
\end{align*}
verifying the first line of \eqref{eq:U}.

Now consider the case \(m=1\).  
Set
\[
   \tilde{x}
      :=\frac{1}{\sqrt{a_1k}}
         \cosh^{-1}(2k^{4/3})
      =\frac{1}{\sqrt{a_1k}}
         \log\!\bigl(2k^{4/3}+\sqrt{4k^{8/3}-1}\bigr),
\]
so that \(U(\tilde{x}) = 2k^{-2/3}\) and, for large \(k\),
\(k^{-1}<\tilde{x}<k^{-1/3}\).
Repeating the above estimate gives
\[
   \int_{-k^{-1}}^{\tilde{x}} k\,U(s)\,ds
      \le 2\,\tilde{x}\,k\,U(\tilde{x})
      \le \frac{2\log(4k^{2})}{\sqrt{a_1}}\,
           k^{-1/6}
      < 1.
\]

Choosing \(K_{1}=K_{1}(a_1,m)\) large enough for both cases
completes the proof.
\end{proof}
Next, we construct a supersolution corresponding to the exponent $m_4$.
\begin{lemma}\label{lem:supersol}
  Let \(2 \le m\), \(0<a_2\le 1\), and \(1\le b_2\). Then there exist a function $U \in C^2([0,k^{-\frac{1}{6}}])$
	 and a constant \(K_2=K_2(a_2,b_2)>0\) such that for each \(k\ge K_2\), 
  \begin{align}
    &U(0)=k^{-\frac{2}{3}},\quad U'(0)=0, \\
    &k^{\frac{1}{8}}U'(x)<U''(x)\le a_2kU(x) \bigl((m-1)k\int_0^xU(y) \, dy + b_2^{m-1}\bigr)^{-1/(m-1)},\\
		&U'(k^{-\frac{1}{6}})\ge \frac{a_2}{8}\log k.
  \end{align}
\end{lemma}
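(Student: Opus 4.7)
The plan is to take $U$ to be the $C^2$ solution of the Cauchy problem
\begin{align*}
  U''(x) = a_2 k\, U(x)\, V(x), \qquad U(0) = k^{-2/3}, \quad U'(0) = 0,
\end{align*}
where $V(x) := ((m-1)\,k\,I(x) + b_2^{m-1})^{-1/(m-1)}$ and $I(x) := \int_0^x U(y)\,dy$.  This choice makes the right-hand inequality hold with equality, and $V$ is precisely the profile that would evolve under $V' = -k U V^m$ starting from $V(0) = 1/b_2$, mirroring the construction in Lemma~\ref{lem:2}.  Since $U'' \ge 0$ and $U'(0) = 0$, $U$ is nondecreasing and bounded below by $k^{-2/3}$.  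Integrating $U'' = a_2 k U V$ with the change of variables $dI = U\,dx$ yields the closed form
\begin{align*}
  U'(x) = F(I(x)), \qquad F(I) := \tfrac{a_2}{m-2}\bigl(((m-1)kI + b_2^{m-1})^{(m-2)/(m-1)} - b_2^{m-2}\bigr)
\end{align*}
(with the logarithmic analogue $F(I)=a_2\log(1+kI/b_2)$ when $m=2$), together with the energy identity $U(x)^2 = k^{-4/3} + 2G(I(x))$, where $G(I) := \int_0^I F$.  Because $G$ grows sub-quadratically in $I$, the solution extends to all of $[0, k^{-1/6}]$.

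For the lower bound on $U'(k^{-1/6})$, the estimate $U \ge k^{-2/3}$ immediately yields $I(k^{-1/6}) \ge k^{-5/6}$, so $(m-1)k\,I(k^{-1/6}) \ge (m-1)k^{1/6}$, and substituting into the explicit formula for $F$ gives $U'(k^{-1/6}) \ge (a_2/8)\log k$ once $K_2=K_2(a_2,b_2)$ is chosen large enough so that $k^{1/6}$ dominates $b_2^{m-1}$ in the relevant expression.

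The inequality $k^{1/8} U'(x) < U''(x)$ rewrites, for $m > 2$, as
\begin{align*}
  (m-2)\,k\,U(x) > k^{1/8}\bigl(W(x)^{m-1} - b_2^{m-2}\,W(x)\bigr), \qquad W(x) := \bigl((m-1)kI(x)+b_2^{m-1}\bigr)^{1/(m-1)}.
\end{align*}
Since $W \ge b_2 \ge 1$, one has $W^{m-1} - b_2^{m-2} W \le (m-1)\,k\,I$, so the right-hand side is at most $(m-1)\,k^{1/8}\,k\,I$.  Combined with the monotonicity bound $I(x) \le xU(x)$, the claim reduces to $\frac{m-1}{m-2}\,k^{1/8} x < 1$, i.e.\ $k^{1/24} > (m-1)/(m-2)$ on $[0,k^{-1/6}]$, which holds for $k$ large.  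For $m=2$, a short case analysis based on whether $k^{5/6}U$ exceeds $b_2$ handles the extra logarithm in $F$, using $U \ge k^{-2/3}$ in the small-$U$ regime and the a priori upper bound $U(x) \lesssim k^{-1/6}\log k$ (derived from the energy identity together with $G(I)\le IF(I)$) in the large-$U$ regime.  The main obstacle is the implicit coupling between $U$ and $V$ through the integral $I$; the key simplification is the identity $U'=F(I)$, which reduces the system to a Hamiltonian ODE $I'' = F(I)$ with conserved energy, from which all the required estimates follow in closed form.
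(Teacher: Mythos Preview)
Your construction for $m=2$ coincides with the paper's: both solve the ODE $U'' = a_2kU/(kI+b_2)$ with the same initial data, integrate once to get $U' = a_2\log(1+kI/b_2)$, and then combine the lower bound $U'' \ge \tfrac{a_2}{2}k^{1/6}$ (from $I \le k^{-1/6}U$ and $U \ge k^{-2/3}$) with an upper bound $U' \lesssim \log k$ to obtain $k^{1/8}U' < U''$. Your energy identity $U^2 = k^{-4/3} + 2G(I)$ is a nice way to package the estimates, though the paper's route via $U \le k^{-2/3} + k^{-1/6}U'$ is slightly more direct than deriving the $U$ bound from $G(I)\le IF(I)$.

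Where the approaches diverge is for $m>2$. You solve a new ODE for each $m$, making the upper differential inequality an equality; the paper instead keeps the \emph{same} function $U$ from the $m=2$ case and invokes the elementary pointwise inequality
\[
  (z+b_2)^{-1} \le \bigl((m-1)z + b_2^{\,m-1}\bigr)^{-1/(m-1)}, \qquad z\ge 0,\ b_2\ge 1,
\]
so that the $m=2$ estimate $U'' \le a_2kU/(kI+b_2)$ already implies the required bound for every $m\ge 2$. This is shorter, and it also gives a $K_2$ that is genuinely independent of $m$, as the lemma asserts. Your reduction to $k^{1/24} > (m-1)/(m-2)$ forces $K_2 \ge \bigl((m-1)/(m-2)\bigr)^{24}$, which blows up as $m\downarrow 2$; likewise the step ``$k^{1/6}$ dominates $b_2^{m-1}$'' introduces $m$-dependence. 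So as written your argument proves the lemma only with $K_2 = K_2(a_2,b_2,m)$. This is harmless for the application in Theorem~\ref{thm:main-one} (there $m=m_4$ is fixed and $a_2,b_2$ already depend on $m_4$), but it does not quite match the stated conclusion. The simplest fix is exactly the paper's: prove the case $m=2$ and then upgrade via the inequality above.
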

\begin{proof}
	We first treat the case \(m=2\). 
	Define
\[
   I(x):=\int_{0}^{x}U(y)\,dy, \ 
   U''(x)=\frac{a_2kU(x)}{kI(x)+b_2}, \ 
   U(0)=k^{-2/3},\ U'(0)=0.
\]
Standard ODE theory ensures the existence and uniqueness of a function
\(U\in C^{2}\bigl([0,k^{-1/6}]\bigr)\).
	Integrating once gives
\[
   U'(x)=a_2 \left(\log\!\bigl(kI(x)+b_2\bigr)-\log b_2 \right).
\]
	Because \(U''>0\) and \(U'(0)=0\), the functions $U'$, $U$, and $I$ are monotone increasing on $[0,k^{-1/6}]$.
  Consequently $U(x)\ge k^{-2/3}$ and $I(x)\ge k^{-2/3}x$, so
\[
   U' \bigl(k^{-1/6}\bigr)
      \ge  a_2 \left(\log \bigl(k^{1-2/3-1/6}+b_2\bigr)-\log b_2\right)
      \ge \tfrac{a_2}{8}\log k
\]
for all sufficiently large \(k\).

Next we verify that \(k^{1/8}U'(x)<U''(x)\) for every \(x\in[0,k^{-1/6}]\). 
Since \(I\), \(U\), and \(U'\) are increasing, we have \(I(x)\le k^{-1/6}U(x)\)
 and \(U(x)\le k^{-1/6}U'(x)\).
Therefore,
\[
   U''(x)=\frac{a_2kU(x)}{kI(x)+b_2}\ge \frac{a_2kU(x)}{k^{5/6}U(x)+b_2}.
\]
Since \(U(x)\ge k^{-2/3}\), it follows that \(k^{5/6}U(x)\ge k^{1/6}\).
For \(k\) sufficiently large, we also have \(k^{1/6}\ge b_2\), whence
\[
  U''(x)\ge\frac{a_2}{2}k^{1/6}.
\]

Next, from $I(x)\le k^{-1/3}U'(x)$ we deduce
\[
  U'(x)=a_2\log \left(\frac{kI(x)}{b_2}+1\right) 
        \le a_2 \log \left(\frac{k^{2/3}U'(x)}{b_2}+1\right) .
\]
Whenever $U'(x)\ge b_2 \ge 1$, we may apply the elementary bound
 $\log(\xi+1)\le\log(2\xi)$ for all $\xi\ge1$, which yields
\[
 U'\le a_2 \log \Bigl(\frac{2k^{2/3}U'}{b_2}\Bigr)=a_2 \log \Bigl(\frac{2k^{2/3}}{b_2}\Bigr)+ \log (U')
 \le a_2 \log \Bigl(\frac{2k^{2/3}}{b_2}\Bigr)+ \frac{U'}{2}.
\]
Rearranging gives
\[
  U'(x)\le 2a_2\log\Bigl(\frac{2k^{2/3}}{b_2}\Bigr).
\]
Therefore
\[
  k^{1/8}U'(x)
  \le 2a_2 k^{1/8}\log\!\Bigl(\frac{2k^{2/3}}{b_2}\Bigr)
  <\frac{a_2}{2}k^{1/6}\le U''(x),
\]
for sufficiently large $k$.
	
To extend the construction to \(m>2\), we first establish an auxiliary
inequality.  For every \(z\ge 0\) we claim
\begin{align}
	 \frac{1}{z+b_2}\le
	 \bigl((m-1)z+b_2^{m-1}\bigr)^{-1/(m-1)}.\label{lem:supersol1}
\end{align}
Indeed, define
\(
   \phi(z):=(z+b_2)^{m-1}-(m-1)z-b_2^{m-1}.
\)
Because \(b_2\ge 1\) we have
\(\phi(0)=0\) and
\(
   \phi'(z)=(m-1)\bigl((z+b_2)^{m-2}-1\bigr)\ge 0,
\)
so \(\phi(z)\ge 0\) and \eqref{lem:supersol1} follows.

With \eqref{lem:supersol1} in hand, substituting \(z=k I(x)\) shows that
the very same function \(U\) constructed for the case \(m=2\) also
fulfils the differential inequality required for every \(m\ge 2\).
Thus the lemma holds with \(K_{2}=K_{2}(a_2,b_2)\).
\end{proof}
We combine Lemmas \ref{lem:cosh} and \ref{lem:supersol} with
 a moving coordinate frame to construct a more manageable one-dimensional supersolution.

Fix a speed \(s>0\) and set \( y=x+st\). Assuming that the solution takes the form
\(u(x,t) = U(y)\), \(v(x,t) = V(y)\), the parabolic inequalities
\[
   \partial_t u \ge \partial_{xx}u - kuv,
   \qquad
   \partial_t v \le -k\,u^{m_3}v^{m_4}
\] reduce to the stationary ODE system
\begin{equation*}
   sU' \ge U'' - kUV, 
   \qquad
   sV' \le -kU^{m_3}V^{m_4}.
\end{equation*}
In the next theorem, we impose a stronger differential inequality on \(U\)
than that appearing in the ODE system derived above, in order to construct 
a supersolution to the original parabolic equations in higher dimensions.
\begin{theo}
\label{thm:main-one}
Assume that $m_3 \ge 1$, $m_4 \ge 1$, and that either $m_3 > 1$ or $m_4 \ge 2$.
Fix positive constants \( a_3 \), \( b_3 \), and \( c_3 \), and assume \( 0 < s < \frac{1}{2} \).
Then there exists a constant 
\( K_3 = K_3(s,a_3,b_3,c_3,m_3,m_4) > 0 \)
such that, for every \( k \ge K_3 \), one can find a constant  
\( \hat{y} < 2(\log k)^{-1/4} \) and functions  
\( U \in W^{1,\infty}((0,\hat{y})) \) and  
\( V \in W^{1,\infty}((0,\hat{y})) \)
satisfying
\begin{align}
	  & -a_3 |U'(y)|-U''(y)+ kU(y)V(y) \ge 0, \label{mo1}\\ 
  & sV'(y) + kU(y)^{m_3}V(y)^{m_4} \le 0,\label{mo2}\\
	& U(0)>k^{-2}, U(k^{-1})=k^{-2}, U'(k^{-1})=0,V(0)=b_3,\label{mo3}\\
	 &U(\hat{y}) >c_3.\label{mo4}
\end{align}
Moreover, \(U\) is strictly decreasing on \( (0,k^{-1})\) 
and strictly increasing on \( (k^{-1},\hat{y})\).
\end{theo}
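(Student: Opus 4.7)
The plan is to build $U$ piecewise on $[0,\hat y]$ and to define $V$ by integrating the ODE $sV' = -kU^{m_3}V^{m_4}$ from $V(0)=b_3$, so that \eqref{mo2} holds with equality. Transitions between pieces are arranged so that either $U$ is $C^1$, or $U'$ jumps downward (concave kink); the latter contributes only a negative distributional Dirac mass to $U''$, which only helps \eqref{mo1}.

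Phase~I is the reflected cosh $U(y)=k^{-2}\cosh(\sqrt{a_1 k}(y-k^{-1}))$ on $[0,k^{-1}]$, which yields \eqref{mo3} directly; on this tiny interval $U\approx k^{-2}$, so $\int_0^{k^{-1}}kU^{m_3}$ is negligible and $V$ remains essentially $b_3$. On any cosh piece one has $U''=a_1kU$ and $|U'|/U\le\sqrt{a_1k}$, reducing \eqref{mo1} to the pointwise bound $V\ge a_1+a_3\sqrt{a_1/k}$; choosing $a_1$ sufficiently small relative to $b_3$ and to the minimum value of $V$ attained in Phase~II makes this hold throughout Phases~I--II.

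Phase~II drives $U'$ to a value growing with $k$, and splits by exponent regime. When $m_3>1$, the cosh is continued on $[k^{-1},y_2]$ up to the latest $y_2$ at which $V\ge a_1+a_3\sqrt{a_1/k}$; Lemma~\ref{lem:cosh} (with $m=m_3$) bounds the initial portion of $\int kU^{m_3}$ by $1$, and the exponential asymptotics of $\cosh$ combined with the $V$-ODE yield $U'(y_2)\gtrsim k^{1/2-1/(2m_3)}$. When $m_3=1$ and $m_4\ge 2$, the cosh is instead terminated at the interior point $y_1'$ where $U=k^{-2/3}$ --- a concave kink is induced since the next piece begins with $U'(0)=0$ while the cosh has $U'(y_1')>0$ --- and we splice in the function of Lemma~\ref{lem:supersol} with $m=m_4$, choosing $(a_2,b_2)$ (roughly $b_2=1$ and $a_2\lesssim s^{1/(m_4-1)}\min(V(y_1'),1)$) so that the auxiliary $V_L$ appearing in that lemma satisfies $a_2 V_L\le V/2$ throughout $[0,k^{-1/6}]$. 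Then $U''\le a_2kUV_L\le \tfrac{1}{2}kUV$, and the lemma's additional bound $k^{1/8}U'<U''$ gives $a_3|U'|<U''/2$ for $k$ large, whence \eqref{mo1} follows; this phase ends at $y_2=y_1'+k^{-1/6}$ with $U'(y_2)\ge(a_2/8)\log k$.

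Phase~III is the damped tail $U'(y)=U'(y_2)e^{-a_3(y-y_2)}$ on $[y_2,\hat y]$, on which $U''+a_3|U'|\equiv 0$ and \eqref{mo1} holds unconditionally in $V$. Setting $\hat y-y_2\sim c_3/U'(y_2)$, so that $U(\hat y)=U(y_2)+(U'(y_2)/a_3)(1-e^{-a_3(\hat y-y_2)})>c_3$, gives \eqref{mo4}; combining with $y_2\lesssim\log k/\sqrt k$ (case $m_3>1$) or $y_2\lesssim k^{-1/6}$ (case $m_4\ge 2$) yields $\hat y<2(\log k)^{-1/4}$ for $k\ge K_3$. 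The main obstacle throughout is the coupled parameter bookkeeping --- $a_1$ small relative to the minimum of $V$, Lemma~\ref{lem:supersol}'s parameters tuned consistently against both $s$ and the actual ODE-$V$ while remaining in the admissible range $0<a_2\le 1$, $b_2\ge 1$, the ODE-$V$ kept strictly positive on $[0,\hat y]$, and every kink arranged to be concave --- which together determine the threshold $K_3(s,a_3,b_3,c_3,m_3,m_4)$.
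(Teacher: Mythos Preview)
Your construction is correct and follows the paper's three-phase architecture almost exactly: the same reflected $\cosh$ centred at $y=k^{-1}$ to secure \eqref{mo3}, the same growth phase built on Lemma~\ref{lem:cosh} (case $m_3>1$) or on the $\cosh$-then-Lemma~\ref{lem:supersol} splice at the level $U=k^{-2/3}$ (case $m_3=1$, $m_4\ge2$), and a short tail that lifts $U$ above $c_3$. The differences are cosmetic: for the tail the paper uses the concave quadratic $-(\log k)^{3/4}y^{2}+4(\log k)^{1/2}y+C$ paired with $V\equiv0$, whereas you take $U'(y)=U'(y_2)e^{-a_3(y-y_2)}$ so that $-a_3|U'|-U''\equiv0$ identically; in the $m_3>1$ case the paper terminates the $\cosh$ early at $U'=(\log k)/2$ while you run it to the threshold $V=a_1$, obtaining the stronger bound $U'(y_2)\gtrsim k^{1/2-1/(2m_3)}$; and the paper packages the patching via auxiliary pairs $(U_i,V_i)$ and Theorem~\ref{combine functions}, while you define $V$ once globally from the ODE and verify \eqref{mo1} piecewise against that single $V$. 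Each of these variants works, and neither buys anything the other lacks.
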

\begin{proof}
	Let us fix \(k \ge K_3 >K_1+K_2 \), where the constant \(K_3\) will be determined later, and denote
	\( \tilde{y}_1 := \tilde{x}+k^{-1} \), where \(\tilde{x} \in (k^{-1},k^{-\frac{1}{3}})\) is given by Lemma \ref{lem:cosh}.
	We first construct functions \(U_1\) and \(V_1\) on the interval \([0, \tilde{y}_1]\). 
	Let \(\gamma > 0\) be a constant to be specified later. Define
	\begin{align*}
		U_1(y) &:= k^{-2}\cosh\bigl(\sqrt{\gamma k} (y-k^{-1})\bigr),\\
		V_1(y) &:= \begin{cases}
			b_3\exp\left(-\frac{k}{s} \int^y_0 U_1^{m_3}(z) \, dz \right)  & m_4=1,\\[6pt]
			\bigl((m_4-1)\frac{k}{s}\int^y_0 U_1^{m_3}(z) \, dz + b_3^{1-m_4}\bigr)^{-1/(m_4-1)} & m_4>1.
		\end{cases}
	\end{align*}
	It follows from Lemma \ref{lem:cosh} that \((U_1,V_1)\) satisfies \eqref{mo2} and \eqref{mo3}.
	Furthermore, by \eqref{eq:int} we obtain
		\begin{align*}
		V_1(y) > \begin{cases}
			b_3\exp\left(-\frac{1}{s} \right)  & m_4=1,\\[6pt]
			\bigl((m_4-1)\frac{1}{s} + b_3^{1-m_4}\bigr)^{-1/(m_4-1)} & m_4>1.
		\end{cases}
	\end{align*}
Let \( \gamma > 0 \) be less than one half of the right-hand side.  
Then, for sufficiently large \( k \), we have
\[
-a_3 |U_1'(y)| - U_1''(y) + k U_1(y) V_1(y)
\ge k U_1(y) \left( V_1(y) - \gamma - a_3 \sqrt{\tfrac{\gamma}{k}} \right) \ge 0.
\]
Hence, \eqref{mo1} holds.

We first consider the case \( m_3 > 1 \). 
Let \( \tilde{y}_2 \in [0, \tilde{y}_1] \) be the point where \( U_1'(\tilde{y}_2) = \frac{\log k}{2} \).  
Define \( U_2 \) and \( V_2 \) by
\begin{align*}
	& U_2(y):=
  -(\log k)^{3/4}y^{2}
  +4(\log k)^{1/2} y
  +U_1(\tilde{y}_2),\\
	&V_2(y):=0.
\end{align*}
A direct computation shows that, for sufficiently large \( k \), the pair
\((U_2,V_2)\) satisfies \eqref{mo1} and \eqref{mo2}, and \(U_2\) is strictly
increasing on \( [-k^{-1},(\log k)^{-1/4}] \).
Moreover, we have
\[
U_2\left( (\log k)^{-1/4}\right)\ge 3(\log k)^{1/4}>c_3.
\]
Define \( \tilde{U}_2(y) = U_2(y - \tilde{y}_2) \) and \( \tilde{V}_2(y) = 0 \).
We observe that \( U_1(\tilde{y}_2) = \tilde{U}_2(\tilde{y}_2) \) and 
\( U_1'(\tilde{y}_2) > \tilde{U}_2'(\tilde{y}_2) \), so the assumptions of
 Theorem \ref{combine functions} are satisfied.  
 Set \(\hat{y} := \tilde{y}_2 + (\log k)^{-1/4}\) and define
 \begin{align}
	&U(y) :=
\begin{cases}
  U_1(y), & y \in [0,\tilde{y}_2), \\
  \tilde{U}_2(y), & y \in [\tilde{y}_2,\hat{y}],
\end{cases} \notag\\
&V(y) :=
\begin{cases}
  b_3\exp\left(-\frac{k}{s} \int_0^y U(z)^{m_3} \,dz\right), & m_4 = 1,\\
  \left((m_4 - 1)\frac{k}{s} \int_0^y U(z)^{m_3} \,dz + b_3^{1 - m_4} \right)^{-\frac{1}{m_4 - 1}}, & m_4 > 1.
\end{cases} \label{eq: V}
 \end{align}
Then Theorem \ref{combine functions} implies that the pair \((U,V)\) satisfies \eqref{mo1}–\eqref{mo4}.  
Moreover, \( U \) is strictly decreasing on \( (0, k^{-1}) \) and strictly increasing on \( (k^{-1}, \hat{y}) \).

Next, we consider the case \( m_3 = 1 \) and \( m_4 \ge 2 \).
Let \(U_3\) be the function obtained by applying Lemma \ref{lem:supersol} with
\(m = m_4\), \(a_2=s^{\frac{1}{m_4 - 1}}/2\), and
\begin{align*}
	b_2 =\left( (m_4 - 1)\frac{1}{s} + b_3^{1 - m_4}\right)^{\frac{1}{m_4-1}} \ge 1.
\end{align*}
Define
\begin{align*}
	V_3(y):=&\left((m_4 - 1)\frac{k}{s} \int_0^y U_3(z)\,dz + \frac{1}{s}
\left((m_4 - 1)\frac{1}{s} + b_3^{1 - m_4}\right) \right)^{-\frac{1}{m_4 - 1}}\\
=&s^{\frac{1}{m_4 - 1}}
\left((m_4 - 1)k \int_0^y U_3(z)\,dz + 
(m_4 - 1)\frac{1}{s} + b_3^{1 - m_4} \right)^{-\frac{1}{m_4 - 1}}.
\end{align*}
By construction, the pair \((U_3,V_3)\) satisfies the inequality \eqref{mo2}. 
Moreover, Lemma \ref{lem:supersol} implies that, for all sufficiently large \(k\),
\begin{align*}
	&-a_3 |U_3'(y)| - U_3''(y) + k U_3(y) V_3(y) \\
	\ge&-(a_3 k^{-\frac{1}{8}}+1)\frac{1}{2}k U_3(y) V_3(y)+k U_3(y) V_3(y)\\
	\ge&0,
\end{align*}
so that \eqref{mo1} also holds.
By Lemma \ref{lem:cosh}, there exists a point \( \tilde{y}_3 \in [0, \tilde{y}_1] \) such that 
\begin{align*}
	&U_1(\tilde{y}_3) = k^{-2/3} = U_3(0), \quad U_1'(\tilde{y}_3) > U_3'(0) = 0,\\
	&V_1(\tilde{y}_3) > \bigl((m_4-1)\frac{1}{s} + b_3^{1-m_4}\bigr)^{-1/(m_4-1)}
	> V_3(0).
\end{align*}
Hence, we can choose \( \tilde{y}_4 \in (\tilde{y}_3, \tilde{y}_1) \) and \( \tilde{y}_5 \in (0, k^{-1}) \) such that
\[
U_1(\tilde{y}_4) = U_3(\tilde{y}_5), \quad U_1'(\tilde{y}_4) > U_3'(\tilde{y}_5), \quad V_1(\tilde{y}_4) > V_3(\tilde{y}_5).
\]
We define the shifted functions
\[
\tilde{U}_3(y) := U_3(y - \tilde{y}_4 + \tilde{y}_5), \quad \tilde{V}_3(y) := V_3(y - \tilde{y}_4 + \tilde{y}_5).
\]
Then the pair \( (U_1, V_1) \) and \( (\tilde{U}_3, \tilde{V}_3) \)
 satisfy the assumptions of Theorem~\ref{combine functions}.  

 The next step is to construct a function that will be attached to the right end of \( (\tilde{U}_3, \tilde{V}_3) \).
By Lemma \ref{lem:supersol}, there exists a point \( \tilde{y}_6 \in (\tilde{y}_4,\tilde{y}_4+k^{-1/6}) \) such that
\[
\tilde{U}'_3(\tilde{y}_6) = \frac{a_2}{16} \log k.
\]
We now define
\[
U_4(y) := U_2(y - \tilde{y}_6)-U_1(\tilde{y}_2)+\tilde{U}_3(\tilde{y}_6), \quad V_4(y) := 0.
\]
Then, as in the case \( m_3 > 1 \), the assumptions of Theorem \ref{combine functions} are satisfied.

Finally, set \(\hat{y}:=\tilde{y}_6+(\log k)^{-1/4}\) and define \(U\) by assembling
\(U_1,\tilde{U}_3, U_4\) on the consecutive intervals
\([0,\tilde{y}_4)\), \([\tilde{y}_4\), \(\tilde{y}_6)\), \([\tilde{y}_6,\hat{y}]\).
We then define \(V\) via \eqref{eq: V} with this \(U\).
Exactly the same verification as in the case \(m_3>1\) shows that
the pair \((U,V)\) satisfies \eqref{mo1}–\eqref{mo4}, and
\(U\) is strictly decreasing on \((0,k^{-1})\) and strictly increasing on
\((k^{-1},\hat{y})\). This completes the proof of the theorem.
\end{proof}
Prior to the proof of Theorem \ref{thm:main}, we prove the following lemma in the general \(n\)-dimensional setting.
Before the proof, we introduce the notation \( (\supp v_0)^c := \Omega \setminus \supp(v_0) \),
\((\supp v_0)^c_d :=\{x \in \Omega \mid \dist (x,(\supp v_0)^c) <d\} \), and
\(D_d:=(\supp v_0)^c_d \times (0,T)\).
\begin{lemma} \label{lem:3.4}
Let \( u_0, v_0, \Omega, p, n \) satisfy the assumptions of Theorem \ref{thm:main},
 and let \( u_\infty \in C(Q_T) \) be a function satisfying \eqref{eq:heat equation}.
Then there exists a constant \( d_1=d_1(v_0) > 0 \) such that, 
for every \( 0 < d < d_1 \), there exists a nonnegative function
\(\overline{u}_d \in  W^{2,1}_p(D_d) \)
satisfying the following:
\begin{itemize}
  \item \(\overline{u}_d\) satisfies 
  \begin{equation}\label{eq:lem3.4.1}
      \partial_t \overline{u}_d = \Delta \overline{u}_d \quad \text{in } D_d.
    \end{equation}
 \item Fix any \( \alpha \in (0, \min\{1,2 - \frac{n+2}{p} \}) \). Then
 there exists a constant \( C_1=C_1(n,p,T,v_0,\alpha) > 0 \) such that, 
    \begin{equation}\label{eq:lem3.4.2}
  \sup_{0<d<d_1} \left( \|\overline{u}_d\|_{W^{2,1}_p(D_d)} + \|\overline{u}_d\|_{C^{\alpha, \alpha/2}(D_d)} \right)
  \le C_1 \|u_0\|_{W^{2,p}((\supp v_0)^c)}.
  \end{equation}
  \item for every \( \e_1 > 0 \), there exists a constant \( d_2 = d_2(n,p,T,u_0,v_0,\e_1) > 0 \) such that, for all \( 0 < d < \min\{d_1, d_2\} \),
 \begin{equation}\label{eq:lem3.4.3}
  u_\infty+\e_1 \ge \overline{u}_d\ge u_\infty
  \quad \text{in } (\supp v_0)^c \times [0,T].
  \end{equation}
\end{itemize}
\end{lemma}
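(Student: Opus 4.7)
The plan is to define $\overline u_d$ as the solution on $D_d$ of the heat equation with initial datum and time-independent Dirichlet boundary datum both equal to a common nonnegative $W^{2,p}(\Omega)$-extension $\tilde u_0$ of $u_0|_{(\supp v_0)^c}$ across the $C^2$ interface $\partial\supp v_0$. The regularity bounds will follow from parabolic $L^p$ theory and the parabolic Sobolev embedding, and the comparison with $u_\infty$ from the parabolic maximum principle together with a direct H\"older-continuity estimate at $\partial\supp v_0$.

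First, choose $d_1$ small enough that $(\supp v_0)^c_{d_1}\Subset\Omega$ and every parallel surface $\partial(\supp v_0)^c_d$ with $d\in(0,d_1)$ is of class $C^2$ with regularity constants uniform in $d$, which is guaranteed by $(\supp v_0)^c\Subset\Omega$ and the $C^2$-regularity of $\partial\supp v_0$. A Hestenes-type reflection across $\partial\supp v_0$ produces a preliminary $\hat u_0\in W^{2,p}(\Omega)$ with $\hat u_0=u_0$ on $(\supp v_0)^c$ and $\|\hat u_0\|_{W^{2,p}(\Omega)}\le C\|u_0\|_{W^{2,p}((\supp v_0)^c)}$; because $2p>n+2$ the embedding $W^{2,p}(\Omega)\hookrightarrow C^{\alpha}(\overline\Omega)$ holds, and since $u_0=0$ on $\partial\supp v_0$ one gets $|\hat u_0(x)|\le\omega(\dist(x,\partial\supp v_0))$ for some modulus $\omega(r)\to 0$. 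Adding a suitable nonnegative $W^{2,p}$ bump supported in $\supp v_0$ that dominates the possibly-negative part of $\hat u_0$ (itself controlled by $\omega$) produces the desired $\tilde u_0\in W^{2,p}(\Omega)$: nonnegative, agreeing with $u_0$ on $(\supp v_0)^c$, with the same norm control and the analogous smallness $|\tilde u_0(x)|\le\tilde\omega(\dist(x,\partial\supp v_0))$. Define $\overline u_d$ as the unique solution of $\partial_t\overline u_d=\Delta\overline u_d$ in $D_d$ with $\overline u_d(\cdot,0)=\tilde u_0$ and $\overline u_d(x,t)=\tilde u_0(x)$ on $\partial(\supp v_0)^c_d\times(0,T)$. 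Writing $\overline u_d=\tilde u_0+w_d$ reduces matters to a compatible Dirichlet parabolic problem for $w_d$ with $L^p$-source $\Delta\tilde u_0$, zero initial datum, and zero Dirichlet datum; parabolic $L^p$ Calder\'on--Zygmund theory then gives $\|w_d\|_{W^{2,1}_p(D_d)}\le C\|u_0\|_{W^{2,p}((\supp v_0)^c)}$ uniformly in $d$ owing to the uniform $C^2$-control of $\partial(\supp v_0)^c_d$, from which \eqref{eq:lem3.4.2} follows together with the parabolic Sobolev embedding. Nonnegativity of $\overline u_d$ on $D_d$ is immediate from the maximum principle since both initial and boundary data are nonnegative.

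On $(\supp v_0)^c\times(0,T)$ both $\overline u_d$ and $u_\infty$ solve the heat equation with common initial datum $u_0$, and $u_\infty=0\le\overline u_d$ on $\partial\supp v_0\times(0,T)$, so the maximum principle gives $\overline u_d\ge u_\infty$. For the upper bound, combine $\overline u_d|_{\partial(\supp v_0)^c_d\times(0,T)}=\tilde u_0|_{\partial(\supp v_0)^c_d}\le\tilde\omega(d)$ with the uniform $C^{\alpha,\alpha/2}$-bound on $\overline u_d$: traversing the normal segment of length $d$ from any $y\in\partial\supp v_0$ to its nearest point on $\partial(\supp v_0)^c_d$ yields $\overline u_d(y,t)\le\tilde\omega(d)+Cd^\alpha$, and another application of the maximum principle to $\overline u_d-u_\infty$ on $(\supp v_0)^c\times[0,T]$ then gives $\overline u_d-u_\infty\le\tilde\omega(d)+Cd^\alpha$, so choosing $d_2$ with $\tilde\omega(d)+Cd^\alpha<\e_1$ for $d<d_2$ completes \eqref{eq:lem3.4.3}. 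The main technical obstacle is the construction of the nonnegative $W^{2,p}$-extension $\tilde u_0$: the Hestenes--Seeley reflections that preserve $W^{2,p}$-regularity do not preserve sign, so the dominating-bump remedy is necessary, and one must verify the $W^{2,p}$-control on the bump depending only on $\|u_0\|_{W^{2,p}((\supp v_0)^c)}$ and on the geometry of $\partial\supp v_0$ and $\Omega$.
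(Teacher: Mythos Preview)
Your overall strategy---extend $u_0$ across $\partial\supp v_0$ to a $W^{2,p}$ function, solve the heat equation on $D_d$ with this extension as compatible time-independent initial and Dirichlet data, invoke parabolic $L^p$ estimates and the Sobolev embedding for \eqref{eq:lem3.4.2}, and use the maximum principle together with a H\"older estimate across the $d$-strip for \eqref{eq:lem3.4.3}---is precisely the paper's approach, and every step except the nonnegativity argument goes through as you write.

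The genuine gap is the ``dominating-bump'' construction of a \emph{nonnegative} $W^{2,p}$ extension $\tilde u_0$. Take for concreteness $p>n$ (always the case when $n\le 2$, and an admissible choice for every $n$), so that $W^{2,p}(\Omega)\hookrightarrow C^1(\overline\Omega)$. Any $b\in W^{2,p}(\Omega)$ that vanishes on the open set $(\supp v_0)^c$ is then $C^1$ with $\nabla b\equiv 0$ on $\overline{(\supp v_0)^c}$, so $b(x)=o(\dist(x,\partial\supp v_0))$ as one approaches the interface from inside $\supp v_0$. On the other hand, any $W^{2,p}$ extension $\hat u_0$ has the same normal-derivative trace on $\partial\supp v_0$ as $u_0$, and since $u_0\ge 0$ on $(\supp v_0)^c$ with $u_0=0$ on the interface, one has $\partial_\nu u_0\le 0$ there, with strict inequality generically. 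Thus $(\hat u_0)_-(x)\sim|\partial_\nu u_0|\,\dist(x,\partial\supp v_0)$ just inside $\supp v_0$, and no such bump $b$ can dominate it. The obstruction is intrinsic: a nonnegative $W^{2,p}$ extension agreeing with $u_0$ on $(\supp v_0)^c$ need not exist.

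The paper avoids this entirely by \emph{not} insisting on a nonnegative extension. It uses the (sign-changing) Sobolev extension $\tilde u_0$ directly, solves the heat equation to get $\hat u_d$, and then sets
\[
  \overline u_d:=\hat u_d + C\,d^{\alpha}\|u_0\|_{W^{2,p}((\supp v_0)^c)}.
\]
Since $\tilde u_0=u_0\ge 0$ on $(\supp v_0)^c$ and $\tilde u_0=0$ on $\partial\supp v_0$, H\"older continuity gives $\tilde u_0\ge -Cd^{\alpha}\|u_0\|_{W^{2,p}}$ throughout $(\supp v_0)^c_d$; the maximum principle then yields $\hat u_d\ge -Cd^{\alpha}\|u_0\|_{W^{2,p}}$ and hence $\overline u_d\ge 0$. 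The added constant is $O(d^{\alpha})$, so it affects neither the uniform $W^{2,1}_p$ and $C^{\alpha,\alpha/2}$ bounds nor the linear dependence on $\|u_0\|_{W^{2,p}}$, and it is absorbed into the $\e_1$-upper bound in \eqref{eq:lem3.4.3} by exactly your H\"older argument on the normal segment. Replacing your bump construction with this constant shift repairs the only gap in the proposal.
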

\begin{proof}
By a standard Sobolev extension theorem (see, e.g., \cite[Section 5.4]{evans2010}),
there exists a function \( \tilde{u}_0 \in W^{2,p}(\mathbb{R}^n) \) such that
\begin{align}\label{eq:tilde u_0}
	\tilde{u}_0 = u_0 \quad \text{in } (\supp v_0)^c, \quad 
\|\tilde{u}_0\|_{W^{2,p}(\mathbb{R}^n)} \le C_2 \|u_0\|_{W^{2,p}((\supp v_0)^c)},
\end{align}
where the constant \( C_2=C_2(n,p,v_0) > 0 \). 

Since \( \Omega \) is a bounded domain and \( \partial \supp(v_0) \) is of class \( C^2 \),
 the set \( \supp(v_0) \) has only finitely many connected components.
Therefore, for sufficiently small \( d > 0 \), the boundary \( \partial (\supp v_0)^c_d \) is of class \( C^2 \).
Hence, by the global \( W^{2,1}_p \) estimates for the heat equation with Dirichlet boundary conditions
(see \cite[Chap. IV]{ladyzhenskaya1968}), 
there exist constants \( d_1 = d_1(v_0) > 0 \) and \( C_3 = C_3(n, p,T, v_0,\alpha) > 0 \) such that the following holds.
For every \( 0 < d < d_1 \), one can construct a function \( \hat{u}_d \in W^{2,1}_p(D_d) \) satisfying
\begin{align*}
  \begin{cases}
    \partial_t \hat{u}_d = \Delta \hat{u}_d & \text{in } D_d,\\
     \hat{u}_d= \tilde{u}_0 & \text{on } (\partial (\supp v_0)^c_d)\times (0,T),\\
    \hat{u}_d(\cdot,0) = \tilde{u}_0 & \text{in } (\supp v_0)^c,
  \end{cases}\end{align*}
	and
	\begin{align*}
	 \sup_{0<d<d_1} \left( \|\hat{u}_d\|_{W^{2,1}_p(D_d)} + \|\hat{u}_d\|_{C^{\alpha,\alpha/2}(D_d)} \right)
  \le C_3 \|u_0\|_{W^{2,p}((\supp v_0)^c)}.
\end{align*}
By \eqref{eq:tilde u_0}, we have \(\tilde{u}_0 \ge -d^\alpha C_2 \|u_0\|_{W^{2,p}((\supp v_0)^c)}\) in \((\supp v_0)^c_d\).
The function \(\overline{u}_d := \hat{u}_d + d^\alpha C_2 \|u_0\|_{W^{2,p}((\supp v_0)^c)}\)
 satisfies \eqref{eq:lem3.4.1} and \eqref{eq:lem3.4.2}. Using the maximum principle, we obtain
 \(\overline{u}_d\ge 0\) in \(D_d\)
 and \(\overline{u}_d\ge u_\infty\) in \((\supp v_0)^c \times [0,T]\).
On \((\partial (\supp v_0)^c)\times (0,T) \), we estimate
\begin{align*}
	&\max_{(\partial (\supp v_0)^c)\times (0,T)} \overline{u}_d\\\le&
	\max_{\partial (\supp v_0)^c_d} (\tilde{u}_0+d^\alpha C_2 \|u_0\|_{W^{2,p}((\supp v_0)^c)}) 
	+ d^\alpha C_3 \|u_0\|_{W^{2,p}((\supp v_0)^c)}\\
	\le&d^\alpha (2C_2+C_3) \|u_0\|_{W^{2,p}((\supp v_0)^c)}.
\end{align*}
Fix \(\e_1>0\) and choose \(d_2>0\) so small that 
\[
  d_2^\alpha (2C_2 + C_3)\,\|u_0\|_{W^{2,p}((\supp v_0)^c)} < \e_1.
\]
Then, by the comparison principle, \eqref{eq:lem3.4.3} follows, and the lemma is proved.
\end{proof}
We conclude this section by proving the main result.
\begin{proof}[Proof of Theorem \ref{thm:main}]
	To complete the proof of Theorem~\ref{thm:main}, we establish uniform convergence of \( u_k \to u_\infty \) in \( Q_T \), and 
\( v_k \to v_0 \) in any compact subset of \( \supp v_0 \).
Since \( u_k \) satisfies \( \partial_t u_k = \Delta u_k \) in \( (\supp v_0)^c \times (0,T) \)
and is nonnegative in \( \Omega \times (0,T) \),
the comparison principle implies that
\[
u_k \ge u_\infty \quad \text{in } \Omega \times (0,T).
\]
Fix \( \e > 0 \) and an open set \( \Omega' \Subset (\supp v_0)^c \).  
We will show that
\begin{align}
  u_k(x,t) \le u_\infty(x,t) + \e \quad &\text{in } \Omega \times [0,T], \label{eq:main u} \\
  0 \le v_0(x) - v_k(x,t) \le \e \quad &\text{in } \Omega' \times [0,T],\label{eq:main v}
\end{align}
for all sufficiently large \( k \).

Let \(\tilde{U}_1\) be the function constructed in Lemma \ref{lem:3.4}.
Set \(\e_1 = \e/2\) and choose \(d >0\) so small that \(d< \min\{d_1, d_2\}\)
 and \(\Omega' \Subset \Omega \setminus (\supp v_0)^c_d \Subset \Omega\).
Define the functions on \(D_d\) by
\begin{align*}
	U_1(x,t) &:= \tilde{U}_1(x,t) + \frac{\e}{2}, \quad V_1(x,t)=0.
\end{align*}

Next, since \(\partial(\supp v_0)\) is of class \(C^2\), the signed distance function 
\(\rho(x) := \dist(x, \partial(\supp v_0))\) is of class \(C^2\) 
in a tubular neighborhood of the boundary (e.g., \cite[Chap. 14]{GilbargTrudinger2001}). 
Here, we define \(\rho(x)\) to be positive outside \(\supp v_0\) and negative inside. 
In particular, for sufficiently small \(d > 0\), there exists a constant \(C_1 = C_1(v_0) > 0\) such that
\[
  |\nabla \rho(x)|=1, \quad |\nabla^2 \rho(x)| < C_1 \quad \text{for all } 
	x \in \{ x \in \Omega : |\rho(x)| < d \}.
\]
Define
\[
  \underline{v}_d := \min\left\{1,\; \min_{x \in \Omega \setminus (\supp v_0)^c_{d/2}} v_0(x)\right\}.
\]
Since \(v_0\) is lower semicontinuous and strictly positive in \(\supp v_0\), we have \(\underline{v}_d > 0\).
Let \((\tilde{U}_2, \tilde{V}_2)\) be the pair of functions on \([0, \hat{y}]\)
constructed in Theorem \ref{thm:main-one}, with the following parameter choices:
\[
s := \min\left\{\frac{1}{4}, \frac{d}{8T} \right\}, \quad
a_3 := C_1+1 , \quad
b_3 := \left(\frac{\underline{v}_d}{2}\right)^{-1}, \quad
c_3 := 1 + \|u_0\|_{L^\infty(\Omega)}.
\]
Define 
\[
  Q^2 := \left\{(x,t)\in \Omega\times[0,T] \Bigm|
            0 \le \rho(x)+st+\frac{3d}{4} \le \hat{y}\right\},
\]
and set
\[
  U_2(x,t) := \tilde{U}_2 \left(\rho(x)+st+\frac{3d}{4}\right),\qquad
  V_2(x,t) := \tilde{V}_2 \left(\rho(x)+st+\frac{3d}{4}\right)
\]
for \((x,t)\in Q^2\). Then we obtain
\begin{align*}
	\partial_t U_2 - \Delta U_2+ k U_2 V_2
	=&s\tilde{U}'_2-\tilde{U}''_2 |\nabla \rho|^2-\tilde{U}'_2 \Delta \rho  + k\tilde{U}_2 \tilde{V}_2\\
	\ge & -(C_1+1)\tilde{U}'_2-\tilde{U}''_2+ k\tilde{U}_2 \tilde{V}_2 \ge 0,\\
	\partial_t V_2 + k U_2^{m_3} V_2^{m_4} 
	=&s\tilde{V}'_2 + k\tilde{U}_2^{m_3} \tilde{V}_2^{m_4} \le 0.
\end{align*}

Next, set
\[
  Q^{3}
  :=\left\{(x,t)\in\Omega\times[0,T] \middle|
        x\in\Omega\setminus\left(\supp v_{0}\right)^{c}_{st+\frac{3d}{4}-k^{-1}}
     \right\},
\]
and define, for \((x,t)\in Q^{3}\),
\[
  U_{3}(x,t):=\frac{k^{-2}+\min\{\tilde{U}_2(0),\,3k^{-2}\}}{2},
  \qquad
  V_{3}(x,t):=\underline{v}_d\,
              \exp\left(-2^{m_3+1}k^{-1}t\right).
\]
Then we have
\begin{align*}
	&\partial_t U_3 - \Delta U_3 + k U_3 V_3
	= k U_3 V_3 \ge 0,\\
	&\partial_t V_3 + k U_3^{m_3} V_3^{m_4} 
	=-2^{m_3+1}k^{-1}V_3 + 2^{m_3}k^{-1} V_3 \le 0.
\end{align*}

Next we verify the assumptions of Theorem \ref{combine functions}. 
Set \(\partial Q^{2,3}:=\partial Q^{2}\cap Q^{3}\) and
\(\partial Q^{2,1}:=\partial Q^{2}\setminus\partial Q^{2,3}\).
By construction, we have, on the corresponding boundary pieces,
\begin{align*}
	&U_{3}\le 2k^{-2}< \frac{\e}{2}\le U_{2}       &&\text{on } \partial D_d,\\
	&U_{1}<1+\|u_0\|_{L^\infty(\Omega)}\le U_{2}       &&\text{on } \partial Q^{2,1},\\
	&U_{3}<\tilde{U}_2(0)=U_{2}       &&\text{on } \partial Q^{2,3},\\
	&U_{2}=k^{-2}<U_{3} &&\text{on } \partial Q^3,
\end{align*}
and, moreover,
\[
  \min_{\Omega_{3}} V_{3}
  =\underline v_{d}
   \exp\left(-2^{m_{3}+1}k^{-1}T\right)
  \ge \tilde V_{2}\left(k^{-1}\right)\ge 0= V_{1}.
\]
Hence the assumptions of Theorem \ref{combine functions} are satisfied. Define
\begin{align*}
	U&=\min\left\{U_{1},U_{2},U_{3}\right\},\\
  V(x,t)&=
  \begin{cases}
    \underline{v}_d(x)\exp\left(-k\!\int_0^t\! U(x,\tau)\,d\tau\right) & \text{if } m_4=1,\\[2pt]
    \left((m_4{-}1)k\!\int_0^t\! U(x,\tau)\,d\tau + \underline{v}_d^{1-m_4}\right)^{\frac{1}{1-m_4}} & \text{if } m_4>1.
  \end{cases}
\end{align*}
Then we obtain
\[
  \partial_{t}U-\Delta U+kUV\ge 0,\quad
  \partial_{t}V+kU^{m_{3}}V^{m_{4}}\le 0,\quad
  \partial_{\nu}U=\partial_\nu U_3 =0,
\]
with \(U(x,0)\ge u_{0}(x)\) and \(V(x,0)\le v_{0}(x)\) in \(\Omega\).
Applying Theorem 2.1 yields \( U \ge u_k \) and \( V \le v_k \) in \( Q_T \), which proves \eqref{eq:main u}.

Finally, on \(\Omega'\), we have
\[
  \partial_{t}v_{k}
  \ge
  -kU_{3}^{m_{3}}\left(v_{0}(x)\right)^{m_{4}}
  \ge
  -2^{m_{3}}k^{-1}\,\|v_{0}\|_{L^{\infty}(\Omega)}^{m_{4}},
\]
which implies
\[
  v_k(x,t) \ge v_0(x) - 2^{m_3}k^{-1} \|v_0\|_{L^\infty(\Omega)}^{m_4} t.
\]
Hence \eqref{eq:main v} follows for sufficiently large \(k\), and the proof of Theorem \ref{thm:main} is complete.
\end{proof}



\begin{thebibliography}{99}

\bibitem{Boyadzhiev2025}
G.~Boyadzhiev.
Comparison Principle for Weakly Coupled Cooperative Parabolic Systems with Delays.
\emph{Mathematics}, 13 (2025): 1230.

\bibitem{bouillard2009fast}
N.~Bouillard, R.~Eymard, M.~Henry, R.~Herbin, D.~Hilhorst.
A fast precipitation and dissolution reaction for a reaction-diffusion system arising in a porous medium.
\emph{Nonlinear Anal. Real World Appl.}, 10 (2009): 629--638.

\bibitem{conti2005asymptotic}
M.~Conti, S.~Terracini, G.~Verzini.
Asymptotic estimates for the spatial segregation of competitive systems.
\emph{Adv. Math.}, 195 (2005): 524--560.

\bibitem{crooks2004spatial}
E.~C. Crooks, E.~N. Dancer, D.~Hilhorst, M.~Mimura, H.~Ninomiya.
Spatial segregation limit of a competition-diffusion system with Dirichlet boundary conditions.
\emph{Nonlinear Anal. Real World Appl.}, 5 (2004): 645--665.

\bibitem{crooks2024fast}
E.~Crooks, Y.~Du.
Fast-reaction limit of reaction-diffusion systems with nonlinear diffusion.
\emph{Commun. Contemp. Math.}, 26 (2024): 2350042.

\bibitem{dancer1999spatial}
E.~N. Dancer, D.~Hilhorst, M.~Mimura, L.~A. Peletier.
Spatial segregation limit of a competition-diffusion system.
\emph{Eur. J. Appl. Math.}, 10 (1999): 97--115.

\bibitem{evans2010}
L.~C. Evans.
\emph{Partial Differential Equations}, 2nd ed.,
Graduate Studies in Mathematics, vol. 19.
American Mathematical Society, Providence, 2010.

\bibitem{eymard2001reaction}
R.~Eymard, D.~Hilhorst, R.~van der Hout, L.~A. Peletier.
A reaction-diffusion system approximation of a one-phase Stefan problem.
In \emph{Optimal Control and Partial Differential Equations}, IOS Press, 2001, pp. 156--170.

\bibitem{GilbargTrudinger2001}
D.~Gilbarg, N.~S. Trudinger.
\emph{Elliptic Partial Differential Equations of Second Order}.
Classics in Mathematics. Springer-Verlag, Berlin, 2001.

\bibitem{hayashi2021spatial}
K.~Hayashi.
Spatial-segregation limit for exclusion processes with two components under unbalanced reaction.
\emph{Electron. J. Probab.}, 26 (2021) 36.

\bibitem{hilhorstFastReactionLimit1996}
D.~Hilhorst, R.~Van Der~Hout, L.~A. Peletier.
The Fast Reaction Limit for a Reaction-Diffusion System.
\emph{J. Math. Anal. Appl.}, 199 (1996): 349--373.

\bibitem{hilhorst1997diffusion}
D.~Hilhorst, R.~Van Der Hout, L.~A. Peletier.
Diffusion in the presence of fast reaction: the case of a general monotone reaction term.
\emph{J. Math. Sci. Univ. Tokyo}, 4 (1997): 469--517.

\bibitem{hilhorst2000nonlinear}
D.~Hilhorst, R.~Van Der Hout, L.~A. Peletier.
Nonlinear diffusion in the presence of fast reaction.
\emph{Nonlinear Anal.}, 41 (2000): 803--823.

\bibitem{Hilhorst2001}
D.~Hilhorst, M.~Iida, M.~Mimura, H.~Ninomiya.
A competition-diffusion system approximation to the classical two-phase Stefan problem.
\emph{Japan J. Indust. Appl. Math.}, 18 (2001): 161--180.

\bibitem{hilhorst2008relative}
D.~Hilhorst, M.~Iida, M.~Mimura, H.~Ninomiya.
Relative compactness in $L^p$ of solutions of some $2m$-components competition-diffusion systems.
\emph{Discrete Contin. Dyn. Syst.}, 21 (2008): 233--244.

\bibitem{Iida2017}
M.~Iida, H.~Monobe, H.~Murakawa, H.~Ninomiya.
Vanishing, moving and immovable interfaces in fast reaction limits.
\emph{J. Differential Equations}, 263 (2017): 2715--2735.

\bibitem{ladyzhenskaya1968}
O.~A. Ladyženskaja, V.~A. Solonnikov, N.~N. Ural’ceva.
\emph{Linear and Quasi-linear Equations of Parabolic Type}, volume 23 of Translations of Mathematical Monographs.
American Mathematical Society, Providence, RI, 1968.

\bibitem{perthame2023fast}
B.~Perthame, J.~Skrzeczkowski.
Fast reaction limit with nonmonotone reaction function.
\emph{Commun. Pure Appl. Math.}, 76 (2023): 1495--1527.

\bibitem{stephan2021edp}
A.~Stephan.
EDP-convergence for a linear reaction-diffusion system with fast reversible reaction.
\emph{Calc. Var. Partial Differential Equ.}, 60 (2021): 226.

\bibitem{tsukamoto2025}
Y.~Tsukamoto.
Interface disappearance in fast reaction limit.
\emph{Nonlinear Anal. Real World Appl.}, 85 (2025): 104333.

\end{thebibliography}
\end{document}